\newtheorem{theorem} {Theorem}
\newtheorem{lemma} {Lemma}
\newtheorem{definition} {Definition}
\newtheorem{corollary} {Corollary}
\newtheorem{assumption} {Assumption}
\newtheorem{remark} {Remark}
\def\u{{\mathbf{u}}}
\def\v{{\mathbf{v}}}
\def\z{{\mathbf{z}}}
\def\w{{\mathbf{w}}}
\def\s{{\mathbf{s}}}
\def\y{{\mathbf{y}}}
\def\b{{\mathbf{b}}}
\def\X{{\mathbf{X}}}
\def\Y{{\mathbf{Y}}}
\def\A{{\mathbf{A}}}
\def\M{{\mathbf{M}}}
\def\N{{\mathbf{N}}}
\def\I{{\mathbf{I}}}
\def\S{{\mathbf{S}}}
\def\Z{{\mathbf{Z}}}
\def\W{{\mathbf{W}}}
\def\U{{\mathbf{U}}}
\def\P{{\mathbf{P}}}
\def\G{{\mathbf{G}}}
\def\Sn{{\mathcal{S}_n}}
\DeclareMathOperator*{\argmin}{arg\,min}
\DeclareMathOperator*{\argmax}{arg\,max}
\newcommand{\mA}{\mathcal{A}}
\newcommand{\mK}{\mathcal{K}}
\newcommand{\mS}{\mathcal{S}}
\newcommand{\mbS}{\mathbb{S}}
\newcommand{\trace}{\textrm{Tr}}
\newcommand{\rank}{\textrm{rank}}
\newcommand{\diag}{\textrm{diag}}
\newcommand{\support}{\textrm{support}}
\newcommand{\reals}{\mathbb{R}}
\newcommand{\sign}{\textrm{sign}}
\title{Low-Rank Extragradient Method for Nonsmooth and Low-Rank Matrix Optimization Problems\footnote{This version corrects an error in the original paper published in NeurIPS 2021 \cite{GarberKaplan21}: while the version \cite{GarberKaplan21} provides convergence rates w.r.t. the best iterate (which under the assumptions of the paper is guaranteed to be low-rank), this corrected version provides the same rates but for the ergodic sequence, i.e., the averaged iterate (which, under our assumptions, is the average of low-rank iterates).}}
\author{Dan Garber \\ {\small Technion - Israel Institute of Technology}\\ {\small \texttt{dangar@technion.ac.il}}
\and
Atara Kaplan \\  {\small Technion - Israel Institute of Technology} \\ {\small  \texttt{ataragold@campus.technion.ac.il}}
}
\date{}
\begin{document} 

\maketitle

\begin{abstract}
Low-rank and nonsmooth matrix optimization problems capture many fundamental tasks in statistics and machine learning.
While significant progress has been made in recent years in developing efficient methods for \textit{smooth} low-rank optimization problems that avoid maintaining high-rank matrices and computing expensive high-rank SVDs, advances for nonsmooth problems have been slow paced. 

In this paper we consider standard convex relaxations for such problems. Mainly, we prove that under a natural \textit{generalized strict complementarity} condition and under the relatively mild assumption that the nonsmooth objective can be written as a maximum of smooth functions, the \textit{extragradient method}, when initialized with a ``warm-start'' point, converges to an optimal solution with rate $O(1/t)$ while requiring only two \textit{low-rank} SVDs per iteration. We give a precise trade-off between the rank of the SVDs required and the radius of the ball in which we need to initialize the method.
We support our theoretical results with empirical experiments on several nonsmooth low-rank matrix recovery tasks, demonstrating that using simple initializations, the extragradient method produces exactly the same iterates when full-rank SVDs are replaced with SVDs of rank that matches the rank of the (low-rank) ground-truth matrix to be recovered.
\end{abstract}

\section{Introduction}

Low-rank and nonsmooth matrix optimization problems have many important applications in statistics, machine learning, and related fields, such as \textit{sparse PCA} \cite{sparsePCA1, sparsePCA2}, \textit{robust PCA} \cite{robustPCA1, robustPCA2, robustPCA3, robustPCA4, robustPCA5},  \textit{phase synchronization} \cite{phaseSyncronization1, phaseSyncronization2, phaseSyncronization3}, \textit{community detection and stochastic block models} \cite{abbe2017community}\footnote{in \cite{phaseSyncronization1, phaseSyncronization2, phaseSyncronization3} and \cite{abbe2017community} the authors consider SDPs with linear objective function and affine constraints of the form $\mA(\X)=\b$. By incorporating the linear constraints into the objective function via a $\ell_2$ penalty term of the form $\lambda\Vert{\mA(\X)-\b}\Vert_2$, $\lambda > 0$, we obtain a nonsmooth objective function.},  \textit{low-rank and sparse covariance matrix recovery} \cite{lowRankAndSparse}, \textit{robust matrix completion} \cite{klopp2017robust, chen2011robust}, and more. For many of these problems, convex relaxations, in which one replaces the nonconvex low-rank constraint with a trace-norm constraint, have been demonstrated in numerous papers to be highly effective both in theory (under suitable assumptions) and empirically (see references above).
These convex relaxations can be formulated as the following general nonsmooth optimization problem:
\begin{align} \label{nonSmoothProblem}
\min_{\X\in\Sn} g(\X),
\end{align} 
where $g:\mathbb{S}^n\rightarrow\reals$ is convex but nonsmooth, and $\mathcal{S}_n=\{\X\in\mathbb{S}^n\ \vert\ \trace(\X)=1,\ \X\succeq0\}$ is the spectrahedron in $\mathbb{S}^n$, $\mbS^n$ being the space of $n\times n$ real symmetric matrices.

Problem \eqref{nonSmoothProblem}, despite being convex, is notoriously difficult to solve in large scale. The simplest and most general approach applicable to it is the \textit{projected subgradient method}  \cite{beckOptimizationBook, sebastienMD}, which requires on each iteration to compute a Euclidean projection onto the spectrahedron $\mS_n$, which in worst case amounts to $O(n^3)$ runtime per iteration. In many applications $g(\X)$ follows a composite model, i.e., $g(\X) = h(\X) + w(\X)$, where $h(\cdot)$ is convex and smooth and $w(\cdot)$ is convex and nonsmooth but admits a simple structure (e.g., nonsmooth regularizer). For such composite objectives, without the spectrahedron constraint, proximal methods such as FISTA \cite{FISTA} or splitting methods such as ADMM  \cite{ADMM} are often very effective. However, with the spectrahderon constraint, all such methods require on each iteration to apply a subprocedure (e.g., computing the proximal mapping) which in worst case amounts to at least $O(n^3)$ runtime. 
A third type of off-the-shelf methods include those which are based on the \textit{conditional gradient method} and adapted to nonsmooth problems, see for instance \cite{odor2016frank, lowRankAndSparseOurs, NEURIPS2020_8f468c87, locatello2019stochastic}. The advantage of such methods is that  no expensive high-rank SVD computations are needed. Instead, only a single leading eigenvector computation (i.e., a rank-one SVD) per iteration is required. However, similarly to the subgradient method, these suffer from slow convergence rates ($O(1/\sqrt{t})$ at best, when $t$ is the iteration counter) even when the nonsmooth problem admits favorable structure (as we detail below). Thus, to conclude, standard first-order methods for Problem \eqref{nonSmoothProblem} require in worst case $\Omega(n^3)$ runtime per iteration or suffer from worst-case slow convergence rates.

In the recent works \cite{garberLowRankSmooth, garberStochasticLowRank} it was established that for smooth objective functions, the high-rank SVD computations required for Euclidean projections onto the spectrahedron in standard gradient methods, can be replaced with low-rank SVDs in the  close proximity of  a low-rank optimal solution. This is significant since the runtime to compute a rank-$r$ SVD of a given $n\times n$ matrix using efficient iterative methods typically scales with $rn^2$ (and further improves when the matrix is sparse), instead of $n^3$ for a full-rank SVD.
These results depend on the existence of eigen-gaps in the gradient of the optimal solution, which we refer to as a \textit{generalized strict complementarity condition}.  
These results also hinge on a unique property of the Euclidean projection onto the spectrahedron. The projection onto the spectrahedron of a matrix $\X\in\mathbb{S}^n$, which admits an eigen-decomposition $\X=\sum_{i=1}^n\lambda_i\v_i\v_i^{\top}$, is given by 
\begin{align}\label{eq:euclidProj}
\Pi_{\mathcal{S}_n}[\X]=\sum_{i=1}^n\max\{0,\lambda_i-\lambda\}\v_i\v_i^{\top},
\end{align}
where $\lambda\in\reals$ is the unique scalar satisfying  $\sum_{i=1}^n\max\{0,\lambda_i-\lambda\}=1$. This operation thus truncates all eigenvalues that are smaller than $\lambda$, while leaving the eigenvectors unchanged, thereby returning a matrix with rank equal to the number of eigenvalues greater than $\lambda$. Importantly, when the projection of $\X$ onto $\mS_n$ is of rank $r$, only the first $r$ components in the eigen-decomposition of $\X$ are required to compute it in the first place, and thus, only a rank-$r$ SVD of $\X$ is required.
In other words and simplifying, \cite{garberLowRankSmooth, garberStochasticLowRank}  show that under strict complementary, at the proximity of an optimal solution of rank $r$, the exact Euclidean  projection  equals the rank-$r$ truncated projection given by:
\begin{align} \label{truncatedProjection}
\widehat{\Pi}_{\mathcal{S}_n}^r[\X]:=\Pi_{\mathcal{S}_n}\left[\sum_{i=1}^r\lambda_i\v_i\v_i^{\top}\right].
\end{align}

Extending the results of \cite{garberLowRankSmooth, garberStochasticLowRank} to the nonsmooth setting is difficult since the smoothness assumption is critical to the analysis. 
Moreover, while  \cite{garberLowRankSmooth, garberStochasticLowRank} rely on certain eigen-gaps in the gradients at optimal points, for nonsmooth problems, since the subdifferential set is often not a singleton, it is not likely that a similar eigen-gap property holds for all subgradients of an optimal solution.

In this paper we show that under the mild assumption that Problem \eqref{nonSmoothProblem} can be formulated as a smooth convex-concave saddle-point problem, i.e., the nonsmooth term can be written as a maximum over (possibly infinite number of) smooth convex functions, we can obtain results in the spirit of \cite{garberLowRankSmooth, garberStochasticLowRank}. Concretely, we show that if a generalized strict complementarity (GSC) assumption holds for a low-rank optimal solution (see Assumption \ref{ass:strictcompNonSmooth} in the sequel), the \textit{extragradient method} for smooth convex-concave saddle-point problems \cite{extragradientK,Nemirovski} (see Algorithm \ref{alg:EG} below), when initialized in the proximity of  the optimal solution, converges with its original convergence rate of $O(1/t)$, while requiring only two low-rank SVDs per iteration\footnote{note that the extradgradient method computes two projected-gradient steps on each iteration, and thus two SVDs are needed per iteration.}. It is important to recall that while the extragradient method requires two SVDs per iteration, it has the benefit of a fast $O(1/t)$ convergence rate, while simpler saddle-point methods such as mirror-descent-based only achieve a $O(1/\sqrt{t})$ rate \cite{sebastienMD}.

Our contributions can be summarized  as follows:
\begin{itemize}

\item
We prove that even under (standard) strict complementarity, the projected subgradient method, when initialized with a ``warm-start", may produce iterates with rank higher than that of the optimal solution. This phenomena further motivates our saddle-point approach. See Lemma \ref{lemma:negativeExampleNonSmooth}.

\item
We suggest a generalized strict complementarity (GSC) condition for saddle-point problems and prove that when $g(\cdot)$ --- the objective function in Problem \eqref{nonSmoothProblem}, admits a highly popular saddle-point structure (one which captures all applications we mentioned in this paper),  GSC w.r.t. an optimal solution to  Problem  \eqref{nonSmoothProblem} implies GSC (with the same parameters) w.r.t. a corresponding optimal solution of the equivalent saddle-point problem (the other direction always holds). See Section \ref{sec:smooth2Saddle}.

\item
\textbf{Main result:} we prove that for a smooth convex-concave saddle-point problem and an optimal solution which satisfies GSC, the extragradient method, when initialized with a ``warm-start", converges with its original rate of $O(1/t)$ while requiring only two low-rank SVDs per iteration. Moreover, we prove GSC facilitates a precise and powerful tradeoff: increasing the rank of SVD computations (beyond the rank of the optimal solution) can significantly increase the radius of the ball in which the method needs to be initialized. See Theorem \ref{theroem:allPutTogether}.

\item We present extensive numerical evidence that demonstrate both the plausibility of the GSC assumption in various tasks, and more importantly, demonstrate that indeed the extragradient method with simple initialization converges correctly (i.e., produces exactly the same sequences of iterates) when the rank of the SVDs used to compute the (truncated) projections matches the rank of the (low-rank) ground-truth matrix to be recovered, instead of naively using full-rank SVDs (as suggested by \eqref{eq:euclidProj}). See Section \ref{sec:expr}.
\end{itemize}

\subsection{Additional related work}
Since, as in the works  \cite{garberLowRankSmooth, garberStochasticLowRank} mentioned before which deal with smooth objectives,  strict complementarity plays a key role in our analysis, we refer the interested reader to the recent works \cite{garber2019linear, spectralFrankWolfe, ding2020kfw, garber2020efficient} which also exploit this property for efficient smooth and convex optimization over the spectrahedron. Strict complementarity has also played an instrumental role in two recent and very influential works which used it to prove linear convergence rates for proximal gradient methods \cite{zhou2017unified, drusvyatskiy2018error}.

Besides convex relaxations such as Problem \eqref{nonSmoothProblem}, considerable advances have been made in the past several yeas in developing efficient \textit{nonconvex} methods with global convergence guarantees for low-rank matrix problems. 
In \cite{nonConvexFactorizedAssumptions} the authors consider semidefinite programs and prove that under a smooth manifold assumption on the constraints, such methods converge to the optimal global solution.
In \cite{nonconvexFactorizedGlobal} the authors prove global convergence of factorized nonconvex gradient descent from a ``warm-start'' initialization point for non-linear smooth minimization on the positive semidefinite cone. Very recently, \cite{charisopoulos2021low} has established, under statistical conditions, fast convergence results from ``warm-start'' initialization of nonconvex first-order methods, when applied to nonsmooth nonconvex matrix recovery problems which are based on the explicit factorization of the low-rank matrix. A result of similar flavor concerning nonsmooth and nonconvex formulation of robust recovery of low-rank matrices from random linear measurements was presented in \cite{li2020nonconvex}.  
Finally,  several recent works have considered nonconvex low-rank regularizers which result in nonconvex nonsmooth optimization problems, but guarantee convergence only to a stationary point \cite{lu2014generalized, yao2018large}.

\section{Strict Complementarity for Nonsmooth Optimization and Difficulty of Applying Low-Rank Projected Subgradient Steps}
\label{sec:strictComp}

Our analysis of the nonsmooth Problem \eqref{nonSmoothProblem} naturally depends on  certain subgradients of an optimal solution which, in many aspects, behave like the gradients of smooth functions. The existence of such a subgradient is guaranteed from the first-order optimality condition for constrained convex minimization problems:
\begin{lemma}[first-order optimality condition, see \cite{beckOptimizationBook}] \label{optCondition}
Let $g:\mathbb{S}^n\rightarrow\reals$ be a convex function. Then $\X^*\in\Sn$ minimizes $g$ over $\Sn$ if and only if there exists a subgradient $\G^*\in\partial g(\X^*)$ such that $\langle \X-\X^*,\G^*\rangle\ge0$ for all $\X\in\Sn$.
\end{lemma}

For some $\G^*\in\partial g(\X^*)$ which satisfies the first-order optimality condition for an optimal solution $\X^*$, if the multiplicity of the smallest eigenvalue equals $r^*=\rank(\X^*)$, then it can be shown that the optimal solution satisfies a strict complementarity assumption. The equivalence between a standard strict complementarity assumption on some low-rank optimal solution of a \textit{smooth} optimization problem over the spectrahedron and an eigen-gap in the gradient of the optimal solution was established in \cite{spectralFrankWolfe}. We generalize this equivalence to also include nonsmooth problems. The proof follows similar arguments and is given in \cref{sec:proofKKT}.

\begin{definition}[strict complementarity] \label{def:strict}
An optimal solution $\X^*\in\Sn$ of rank $r^*$ for Problem \eqref{nonSmoothProblem} satisfies the strict complementarity assumption with parameter $\delta >0$, if there exists an optimal solution of the dual problem\footnote{Denote $q(\Z,s)=\min_{\X\in\mathbb{S}^n} \lbrace g(\X)+s(1-\trace(\X))-\langle\Z,\X\rangle\rbrace$.
The dual problem of Problem \eqref{nonSmoothProblem} can be written as:
$ \max_{\lbrace\Z\succeq0,\ s\in\reals\rbrace} \lbrace q(\Z,s) \ \vert \ (\Z,s)\in \textrm{dom}(q)\rbrace$.}
 $(\Z^*,s^*)\in\mbS^n\times\reals$ such that $\rank(\Z^*)=n-r^*$, and $\lambda_{n-r^*}(\Z^*)\ge\delta$. 
\end{definition}

\begin{lemma} \label{lemma:kkt}
Let $\X^*\in\Sn$ be a rank-$r^*$ optimal solution to Problem \eqref{nonSmoothProblem}. $\X^*$ satisfies the (standard) strict complementarity assumption with parameter $\delta>0$ if and only if there exists a subgradient $\G^*\in\partial g(\X^*)$ such that $\langle \X-\X^*,\G^*\rangle\ge0$ for all $\X\in\mathcal{S}_n$ and $\lambda_{n-r^*}(\G^*)-\lambda_{n}(\G^*)\ge\delta$.
\end{lemma}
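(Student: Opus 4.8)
The plan is to pass through Lagrangian duality and interpret both the strict complementarity assumption and the eigen-gap condition as two faces of the same KKT system, the bridge being the identity $\G^* = s^*\I + \Z^*$. The Lagrangian associated with the dual function $q(\Z,s)$ in the footnote of \cref{def:strict} is $L(\X,\Z,s) = g(\X) + s(1-\trace(\X)) - \langle\Z,\X\rangle$, and since $\I/n$ lies in the relative interior of $\Sn$ with $\I/n \succ 0$, Slater's condition holds and strong duality is available. Consequently a primal-dual pair $(\X^*,(\Z^*,s^*))$ is optimal if and only if stationarity $s^*\I + \Z^* \in \partial g(\X^*)$, dual feasibility $\Z^*\succeq 0$, and complementary slackness $\langle\Z^*,\X^*\rangle = 0$ hold. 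I would record once that adding $s^*\I$ only shifts the spectrum of $\Z^*$ by $s^*$ while leaving eigenvectors unchanged, so spectral gaps of $\Z^*$ and of $s^*\I+\Z^*$ coincide.

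For the forward direction, assume strict complementarity and let $(\Z^*,s^*)$ be the guaranteed dual-optimal solution. First I would use strong duality (together with $\trace(\X^*)=1$ and $\Z^*,\X^*\succeq 0$) to force $\langle\Z^*,\X^*\rangle = 0$ and to show that $\X^*$ minimizes $\X\mapsto g(\X) - \langle s^*\I+\Z^*,\X\rangle$ unconstrained over $\mbS^n$; its first-order condition yields a subgradient $\G^* := s^*\I + \Z^* \in \partial g(\X^*)$. The optimality inequality then follows at once, since $\langle\X-\X^*,\G^*\rangle = s^*\trace(\X-\X^*) + \langle\X,\Z^*\rangle - \langle\X^*,\Z^*\rangle = \langle\X,\Z^*\rangle \geq 0$. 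Finally, because $\Z^*\succeq 0$ has rank $n-r^*$ with $\lambda_{n-r^*}(\Z^*)\geq\delta$, the spectrum shift gives $\lambda_n(\G^*)=s^*$ and $\lambda_{n-r^*}(\G^*)=s^*+\lambda_{n-r^*}(\Z^*)\geq s^*+\delta$, i.e.\ $\lambda_{n-r^*}(\G^*)-\lambda_n(\G^*)\geq\delta$.

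For the converse, given $\G^*\in\partial g(\X^*)$ satisfying \cref{optCondition} and the eigen-gap, I would set $s^*:=\lambda_n(\G^*)$ and $\Z^*:=\G^*-s^*\I$. Then $\Z^*\succeq 0$ by construction; complementary slackness $\langle\Z^*,\X^*\rangle = \langle\G^*,\X^*\rangle - s^*\trace(\X^*)$ vanishes once we identify $\langle\G^*,\X^*\rangle = \lambda_n(\G^*)$; and dual optimality follows because $q(\Z^*,s^*)$ reduces to $\min_\X\{g(\X)-\langle\G^*,\X\rangle\}+s^*$, a minimum attained at $\X^*$ since $0\in\partial g(\X^*)-\G^*$, giving $q(\Z^*,s^*)=g(\X^*)$ and hence, by weak duality, joint optimality. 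It remains only to verify $\rank(\Z^*)=n-r^*$, equivalently that the minimal eigenvalue $\lambda_n(\G^*)$ has multiplicity exactly $r^*$.

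I expect this multiplicity computation to be the crux, and it is precisely where the optimality condition (rather than an arbitrary subgradient) is indispensable. The eigen-gap $\lambda_{n-r^*}(\G^*)>\lambda_n(\G^*)$ immediately bounds the multiplicity from above by $r^*$. For the matching lower bound I would use the optimality condition to deduce $\langle\X^*,\G^*\rangle=\min_{\X\in\Sn}\langle\X,\G^*\rangle=\lambda_n(\G^*)$; expanding $\X^*$ in the eigenbasis of $\G^*$ and using $\X^*\succeq0$, $\trace(\X^*)=1$ then forces all the spectral mass of $\X^*$ onto the eigenspace of $\lambda_n(\G^*)$, so that this eigenspace has dimension at least $\rank(\X^*)=r^*$. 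Combining the two bounds gives multiplicity exactly $r^*$, hence $\rank(\Z^*)=n-r^*$ and $\lambda_{n-r^*}(\Z^*)=\lambda_{n-r^*}(\G^*)-\lambda_n(\G^*)\geq\delta$, which is strict complementarity.
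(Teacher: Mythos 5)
Your proof is correct and follows essentially the same route as the paper's: Slater's condition gives strong duality, the KKT system for the Lagrangian $g(\X)+s(1-\trace(\X))-\langle\Z,\X\rangle$ provides the bridge $\G^* = s^*\I + \Z^*$, and the spectrum-shift identity (using $\lambda_n(\Z^*)=0$) converts the dual eigenvalue bound into the eigen-gap of $\G^*$. If anything, your handling of the converse direction --- explicitly setting $s^*=\lambda_n(\G^*)$, $\Z^*=\G^*-s^*\I$, certifying dual optimality via weak duality, and using the first-order optimality condition $\langle\X^*,\G^*\rangle=\min_{\X\in\Sn}\langle\X,\G^*\rangle=\lambda_n(\G^*)$ to pin the multiplicity of $\lambda_n(\G^*)$ at exactly $r^*$ (hence $\rank(\Z^*)=n-r^*$) --- is more complete than the paper's, which treats that direction only implicitly through the KKT correspondence.
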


Throughout this paper we assume a weaker and more general assumption than strict complementarity, namely generalized strict complementarity (GSC), which we present now.

\begin{assumption}[generalized strict complementarity]\label{ass:strictcompNonSmooth}
We say an optimal solution $\X^*$ to Problem \eqref{nonSmoothProblem} satisfies the generalized strict complementarity assumption with parameters $r,\delta$, if there exists a subgradient $\G^*\in\partial g(\X^*)$ such that $\langle \X-\X^*,\G^*\rangle\ge0$ for all $\X\in\mathcal{S}_n$ and $\lambda_{n-r}(\G^*)-\lambda_{n}(\G^*) \geq \delta$.
\end{assumption}

In \cite{garberLowRankSmooth} the author presents several characteristic properties of the gradient of the  optimal solution in optimization problems over the spectrahedron. 
Using the existence of  subgradients which satisfy the condition in \cref{optCondition}, we can extend these properties also to the nonsmooth setting.
The following lemma shows that GSC with parameters $(r,\delta)$ for some $\delta>0$ (\cref{ass:strictcompNonSmooth}) is a sufficient condition for the optimal solution to be of rank at most $r$. The proof follows immediately from the proof of the analogous Lemma 7 in \cite{garberLowRankSmooth}, by replacing the gradient of the optimal solution with a subgradient for which the first-order optimality condition holds. 
\begin{lemma} \label{lemma:Xopt-subgradEigen}
Let $\X^*$ be an optimal solution to Problem \eqref{nonSmoothProblem} and write its eigen-decomposition as $\X^*=\sum_{i=1}^{r^*}{\lambda_i\v_i\v_i^T}$. Then, any subgradient $\G^*\in\partial g(\X^*)$ which satisfies $\langle \X-\X^*,\G^*\rangle\ge0$ for all $\X\in\mathcal{S}_n$, admits an eigen-decomposition such that the set of vectors $\{\v_i\}_{i=1}^{r^*}$ is a set of leading eigenvectors of $(-\G^*)$ which corresponds to the eigenvalue $\lambda_1(-\G^*)=-\lambda_n(\G^*)$. Furthermore, there exists at least one such subgradient. 
\end{lemma}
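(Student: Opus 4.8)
The plan is to exploit the variational reading of the first-order optimality condition, treating the optimality-respecting subgradient $\G^*$ exactly as one would treat the gradient in the smooth case. First I would dispose of the ``furthermore'' claim: the existence of at least one subgradient $\G^*\in\partial g(\X^*)$ with $\langle \X-\X^*,\G^*\rangle\ge0$ for all $\X\in\Sn$ is precisely the content of \cref{optCondition}. It then remains to show that \emph{every} such $\G^*$ has the asserted eigenstructure.

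The key observation is that the inequality $\langle \X-\X^*,\G^*\rangle\ge0$ for all $\X\in\Sn$ says exactly that $\X^*$ minimizes the linear function $\X\mapsto\langle \X,\G^*\rangle$ over the spectrahedron. Since the extreme points of $\Sn$ are the unit-norm rank-one matrices $\v\v^{\top}$ and $\langle \v\v^{\top},\G^*\rangle=\v^{\top}\G^*\v$, the minimum of this linear function over $\Sn$ equals $\lambda_n(\G^*)$, the smallest eigenvalue of $\G^*$. Hence $\langle \X^*,\G^*\rangle=\lambda_n(\G^*)$.

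Next I would expand this optimal value using the eigen-decomposition $\X^*=\sum_{i=1}^{r^*}\lambda_i\v_i\v_i^{\top}$, where $\lambda_i>0$ and $\sum_{i=1}^{r^*}\lambda_i=\trace(\X^*)=1$. This gives $\lambda_n(\G^*)=\langle \X^*,\G^*\rangle=\sum_{i=1}^{r^*}\lambda_i\,\v_i^{\top}\G^*\v_i$. Each Rayleigh quotient satisfies $\v_i^{\top}\G^*\v_i\ge\lambda_n(\G^*)$, so this is a convex combination, with strictly positive weights, of numbers each at least $\lambda_n(\G^*)$ whose value equals $\lambda_n(\G^*)$; writing $\sum_i\lambda_i\big(\v_i^{\top}\G^*\v_i-\lambda_n(\G^*)\big)=0$ with each summand nonnegative forces $\v_i^{\top}\G^*\v_i=\lambda_n(\G^*)$ for every $i$. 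Equality in the Rayleigh-quotient lower bound then forces each unit vector $\v_i$ to be an eigenvector of $\G^*$ for its smallest eigenvalue $\lambda_n(\G^*)$, equivalently an eigenvector of $(-\G^*)$ for its largest eigenvalue $\lambda_1(-\G^*)=-\lambda_n(\G^*)$. As the $\{\v_i\}_{i=1}^{r^*}$ are orthonormal and share this eigenvalue, they can be completed to a full orthonormal eigenbasis, yielding an eigen-decomposition of $(-\G^*)$ in which $\{\v_i\}_{i=1}^{r^*}$ is a set of leading eigenvectors, as claimed.

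I expect the only genuine subtlety to be the final step, namely justifying that equality in the Rayleigh-quotient bound characterizes eigenvectors of the minimal eigenvalue, together with the mild point that a linear functional is minimized over $\Sn$ at a rank-one extreme point with value $\lambda_n(\G^*)$. Both are standard facts about the spectrahedron, and the entire nonsmooth content is absorbed into the single invocation of \cref{optCondition} to produce $\G^*$; once $\G^*$ is fixed, the argument is identical in form to the smooth case (Lemma 7 of \cite{garberLowRankSmooth}), with the optimality-respecting subgradient playing the role the gradient plays there.
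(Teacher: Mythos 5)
Your proof is correct and follows essentially the same route as the paper: the paper's own justification is precisely that the argument of Lemma 7 in \cite{garberLowRankSmooth} goes through verbatim once the gradient is replaced by a subgradient satisfying the first-order optimality condition, and your reconstruction (optimality $\Rightarrow$ $\X^*$ minimizes $\langle\cdot,\G^*\rangle$ over $\Sn$, whose minimum is $\lambda_n(\G^*)$, followed by the convex-combination/Rayleigh-quotient equality argument) is exactly that argument, with existence handled by \cref{optCondition} just as in the paper.
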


One motivation for assuming (standard) strict complementarity (\cref{ass:strictcompNonSmooth} with parameters $r=\rank(\X^*)$ and $\delta>0$) is that it guarantees a certain notion of robustness of the problem to small perturbations in the parameters. It is well known (see for instance \cite{beckOptimizationBook}) that a projected subgradient step from $\X^*$ with respect to a subgradient $\G^*\in\partial g(\X^*)$ for which the first-order optimality condition holds, returns the optimal solution $\X^*$ itself. This implies that $\rank\left( \Pi_{\Sn} [ \X^* - \eta\G^* ]\right)=\rank(\X^*)$ (here $\eta$ is the step-size).
Without (standard) strict complementarity however, a small change in the parameters could result in a higher rank matrix. This is captured in the following lemma which is analogous to Lemma 3 in \cite{garberStochasticLowRank}, where again the proof is straightforward from the proof in  \cite{garberStochasticLowRank} by replacing the gradient of the optimal solution with a subgradient for which the first-order optimality condition holds. 
\begin{lemma}
Let $\X^*$ be an optimal solution of rank $r^*$ to Problem \eqref{nonSmoothProblem}. Let $\G^*\in\partial g(\X^*)$ be a subgradient at $\X^*$ such that $\langle \X-\X^*,\G^*\rangle\ge0$ for all $\X\in\mathcal{S}_n$. Then, $\lambda_{n-r^*}(\G^*)=\lambda_{n}(\G^*)$ if and only if for any arbitrarily small $\zeta>0$ it holds that 
$\rank\left( \Pi_{(1+\zeta)\mathcal{S}_n} [ \X^* - \eta\G^* ]\right)>r^*$,
where $\eta>0$, $(1+\zeta)\mathcal{S}_n = \{(1+\zeta)\X \ \vert\ \X\in\mathcal{S}_n\}$, and $\Pi_{(1+\zeta)\mathcal{S}_n}{[\cdot]}$ denotes the Euclidean projection onto the set $(1+\zeta)\mathcal{S}_n$.
\end{lemma}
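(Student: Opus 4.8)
The plan is to diagonalize $\X^*$ and $\G^*$ in a common eigenbasis and reduce the projection onto the scaled spectrahedron to a one-dimensional thresholding problem, so that the eigen-gap condition translates directly into whether extra directions become active. First I would invoke \cref{lemma:Xopt-subgradEigen}: since $\G^*$ satisfies the first-order optimality condition, the eigenvectors $\v_1,\dots,\v_{r^*}$ of $\X^*$ are leading eigenvectors of $(-\G^*)$ associated with the eigenvalue $-\lambda_n(\G^*)$, so $\X^*$ and $\G^*$, hence $\M:=\X^*-\eta\G^*$, are simultaneously diagonalizable. Extending $\{\v_i\}_{i=1}^{r^*}$ to a common eigenbasis $\{\v_i\}_{i=1}^n$ and letting $\gamma_i$ denote the eigenvalue of $\G^*$ on $\v_i$, the eigenvalues of $\M$ are $\mu_i=\lambda_i(\X^*)-\eta\gamma_i$; for $i\le r^*$ we have $\gamma_i=\lambda_n(\G^*)$ and $\lambda_i(\X^*)>0$, while for $i>r^*$ we have $\lambda_i(\X^*)=0$ and $\gamma_i\ge\lambda_n(\G^*)$.

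Next I would record the projection formula for the scaled set. Exactly as in \eqref{eq:euclidProj}, projecting onto $(1+\zeta)\mathcal{S}_n=\{\Y\succeq0:\trace(\Y)=1+\zeta\}$ truncates all eigenvalues below a threshold $\tau_\zeta$, i.e. $\Pi_{(1+\zeta)\mathcal{S}_n}[\M]=\sum_i\max\{0,\mu_i-\tau_\zeta\}\v_i\v_i^\top$, where $\tau_\zeta$ is the unique scalar with $\sum_i\max\{0,\mu_i-\tau_\zeta\}=1+\zeta$. The rank of the projection equals the number of indices with $\mu_i>\tau_\zeta$, and since $\tau\mapsto\sum_i\max\{0,\mu_i-\tau\}$ is continuous and strictly decreasing wherever it is positive, $\tau_\zeta$ is strictly decreasing in $\zeta$ with $\tau_\zeta\to\tau_0$ as $\zeta\to0^+$ and $\tau_\zeta<\tau_0$ for every $\zeta>0$. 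The crucial calibration is the value of $\tau_0$: since a projected subgradient step from $\X^*$ returns $\X^*$ itself, $\Pi_{\mathcal{S}_n}[\M]=\X^*$, and matching $\max\{0,\mu_i-\tau_0\}=\lambda_i(\X^*)$ for $i\le r^*$ forces $\tau_0=-\eta\lambda_n(\G^*)$. Consequently $\mu_i-\tau_0=\lambda_i(\X^*)>0$ for $i\le r^*$, whereas for $i>r^*$ we get $\mu_i-\tau_0=-\eta(\gamma_i-\lambda_n(\G^*))\le0$, with equality precisely on those directions where $\gamma_i=\lambda_n(\G^*)$.

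The two directions of the equivalence then fall out of a case analysis on the gap. If $\lambda_{n-r^*}(\G^*)=\lambda_n(\G^*)$ (no gap), the multiplicity of the smallest eigenvalue of $\G^*$ exceeds $r^*$, so there is some $j>r^*$ with $\gamma_j=\lambda_n(\G^*)$ and hence $\mu_j=\tau_0$; for every $\zeta>0$ we have $\tau_\zeta<\tau_0=\mu_j$, so $\v_j$ is activated and the rank is at least $r^*+1$. Conversely, if $\lambda_{n-r^*}(\G^*)>\lambda_n(\G^*)$ (gap), then $\mu_j<\tau_0$ strictly for all $j>r^*$; setting $\mu':=\max_{j>r^*}\mu_j<\tau_0$ and using $\tau_\zeta\to\tau_0$, for all sufficiently small $\zeta>0$ we have $\tau_\zeta>\mu'$, so no direction beyond the first $r^*$ is active and the rank equals exactly $r^*$, contradicting the rank being $>r^*$ for arbitrarily small $\zeta$. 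This establishes both implications.

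I expect the main obstacle to be the bookkeeping around the threshold rather than any deep difficulty: one must justify that the scaled-spectrahedron projection retains the truncation form of \eqref{eq:euclidProj}, pin down $\tau_0=-\eta\lambda_n(\G^*)$ from optimality, and argue the continuity and strict monotonicity of $\tau_\zeta$ carefully enough to control exactly which eigenvalues cross the threshold for arbitrarily small $\zeta$. The conceptual content---that an absent eigen-gap leaves ``boundary'' eigenvalues of $\M$ sitting exactly at the truncation level $\tau_0$, so that any enlargement of the trace budget immediately recruits them---is precisely the mechanism behind \cref{lemma:Xopt-subgradEigen} and its smooth analogue in \cite{garberStochasticLowRank}.
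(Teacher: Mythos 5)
Your proof is correct, and it is essentially the paper's intended argument: the paper does not spell out a proof but defers to Lemma 3 of \cite{garberStochasticLowRank} (with the gradient replaced by a subgradient satisfying first-order optimality), and that argument is exactly your mechanism --- simultaneous diagonalization of $\X^*$ and $\G^*$ via \cref{lemma:Xopt-subgradEigen}, the thresholding form \eqref{eq:euclidProj} of the (scaled) spectrahedral projection, and the calibration $\tau_0=-\eta\lambda_n(\G^*)$ showing that absence of an eigen-gap leaves eigenvalues of $\X^*-\eta\G^*$ sitting exactly at the truncation level, so any increase of the trace budget recruits them. Your write-up is a correct, self-contained rendering of that same route, with both directions (including the strict-gap case yielding rank exactly $r^*$ for all sufficiently small $\zeta$) handled properly.
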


\subsection{The challenge of applying low-rank projected subgradient steps}

We now demonstrate the difficulty of replacing the full-rank SVD computations required in projected subgradient steps over the spectrahedron, with their low-rank SVD counterparts when attempting to solve Problem \eqref{nonSmoothProblem}. We prove that a projected subgradient step from a point arbitrarily close to a low-rank optimal solution --- even one that satisfies strict complementarity (Definition \ref{def:strict}), may result in a higher rank matrix.  The problem on which we demonstrate this phenomena is a well known convex formulation of the \textit{sparse PCA} problem \cite{d2007direct}.

\begin{lemma}[failure of low-rank subgradient descent on sparse PCA] \label{lemma:negativeExampleNonSmooth}
Consider the problem
\begin{align*}
\min_{\X\in\Sn}\{g(\X):=-\left\langle\z\z^{\top}+\z_{\perp}\z_{\perp}^{\top},\X\right\rangle+\frac{1}{2k}\Vert\X\Vert_1\},
\end{align*}
where $\z=(1/\sqrt{k},\ldots,1/\sqrt{k},0,\ldots,0)^{\top}$ is supported on the first $k$ entries,  $\z_{\perp}=(0,\ldots,0,\\ 1/\sqrt{n-k},\ldots,1/\sqrt{n-k})^{\top}$ is supported on the last $n-k$ entries, and $k\le n/4$. Then, $\z\z^{\top}$ is a rank-one optimal solution for which  strict complementarity holds.
However, for any $\eta<\frac{2}{3}$ and any $\v\in\reals^n$ such that $\Vert\v\Vert=1$, $\support(\v)\subseteq\support(\z)$, and $\langle\z,\v\rangle^2=1 - \frac{1}{2}\Vert{\v\v^{\top}-\z\z^{\top}}\Vert_F^2 \ge1-\frac{1}{2k^2}$, it holds that
\begin{align*}
\rank\left(\Pi_{\Sn}[\v\v^{\top}-\eta\G_{\v\v^{\top}}]\right)>1,
\end{align*}
where $\G_{\v\v^{\top}}=-\z\z^{\top}-\z_{\perp}\z_{\perp}^{\top}+\frac{1}{2k}\sign(\v\v^{\top})\in\partial g(\v\v^{\top})$.
\end{lemma}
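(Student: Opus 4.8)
The plan is to prove the two assertions separately. For the first --- that $\z\z^\top$ is optimal and strictly complementary --- I would exhibit an explicit subgradient satisfying the optimality inequality of \cref{optCondition} and then invoke \cref{lemma:kkt}. Writing $A=\{1,\dots,k\}=\support(\z)$ and $B=\{k+1,\dots,n\}=\support(\z_\perp)$, I take $\G^{*}=-\z\z^\top-\z_\perp\z_\perp^\top+\frac{1}{2k}\S$, where $\S\in\partial\Vert\cdot\Vert_1(\z\z^\top)$ is the block matrix equal to the all-ones matrix on the $A\times A$ block (forced, since $(\z\z^\top)_{ij}=1/k>0$ there), equal to the all-ones matrix on the $B\times B$ block, and zero on the off-diagonal blocks (both legitimate since $\z\z^\top$ vanishes off the $A\times A$ block, so those entries may be any value in $[-1,1]$). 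A direct block computation using $\S\z=k\z$ then gives $\G^{*}\z=-\tfrac12\z$, while on $B$ the restriction $\G^{*}|_{BB}=\bigl(\tfrac1{2k}-\tfrac1{n-k}\bigr)\mathbf{1}_B\mathbf{1}_B^\top$; here the hypothesis $k\le n/4$ enters, guaranteeing $\tfrac1{2k}-\tfrac1{n-k}>0$ so that this block is PSD. Since $\G^{*}$ is block-diagonal, its eigenvalues are $-\tfrac12$ (simple, eigenvector $\z$), $0$ (high multiplicity), and one strictly positive value, so $\lambda_n(\G^{*})=-\tfrac12$, $\z$ is a leading eigenvector of $-\G^{*}$, and $\lambda_{n-1}(\G^{*})-\lambda_n(\G^{*})=\tfrac12$; optimality follows from \cref{optCondition} and strict complementarity with $\delta=\tfrac12$ from \cref{lemma:kkt}.

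For the second assertion I would first reduce the rank claim to an eigenvalue-gap condition. Writing $\M:=\v\v^\top-\eta\G_{\v\v^\top}$ with eigenvalues $\mu_1\ge\mu_2\ge\cdots$, the projection formula \eqref{eq:euclidProj} shows that $\rank(\Pi_{\Sn}[\M])=1$ exactly when the threshold $\lambda$ solving $\sum_i\max\{0,\mu_i-\lambda\}=1$ leaves only $\mu_1$ above it; this forces $\lambda=\mu_1-1$ and $\mu_2\le\lambda$, i.e.\ it is equivalent to $\mu_1-\mu_2\ge 1$. Hence it suffices to prove the strict gap bound $\mu_1-\mu_2<1$.

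To establish this, substitute $\G_{\v\v^\top}=-\z\z^\top-\z_\perp\z_\perp^\top+\frac1{2k}\s\s^\top$, where $\s=\sign(\v)$ and $\sign(\v\v^\top)=\s\s^\top$, obtaining the block-diagonal matrix $\M=\bigl(\v\v^\top+\eta\z\z^\top-\tfrac{\eta}{2k}\s\s^\top\bigr)\oplus\eta\z_\perp\z_\perp^\top$ (block-diagonal because $\v,\z,\s$ are supported on $A$ and $\z_\perp$ on $B$). The $B$-block contributes the eigenvalue $\eta$ with eigenvector $\z_\perp$. For the $A$-block $\M_A:=\v\v^\top+\eta\z\z^\top-\tfrac{\eta}{2k}\s\s^\top$ I would sandwich $\lambda_1(\M_A)$. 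The test vector $\v$ gives $\lambda_1(\M_A)\ge\v^\top\M_A\v=1+\eta\langle\z,\v\rangle^2-\tfrac{\eta}{2k}\Vert\v\Vert_1^2\ge 1-\tfrac{\eta}{2}>\eta$, using $\langle\s,\v\rangle=\Vert\v\Vert_1$, the Cauchy--Schwarz bound $\Vert\v\Vert_1^2\le k$ for the $k$-sparse $\v$, and finally $\eta<2/3$. For the upper bound, dropping the negative PSD term yields $\lambda_1(\M_A)\le\lambda_1(\v\v^\top+\eta\z\z^\top)$; when $\v\ne\pm\z$ the matrix $\v\v^\top+\eta\z\z^\top$ is PSD of rank two with trace $1+\eta$, so its top eigenvalue is strictly below $1+\eta$ (the other being positive), while for $\v=\pm\z$ one computes directly, via $\s\s^\top=k\,\z\z^\top$, that $\M_A=(1+\tfrac\eta2)\z\z^\top$ and $\lambda_1(\M_A)=1+\tfrac\eta2<1+\eta$. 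Combining, $\eta<\lambda_1(\M_A)<1+\eta$, so $\mu_1=\lambda_1(\M_A)$ and $\mu_2\ge\eta$, whence $\mu_1-\mu_2\le\lambda_1(\M_A)-\eta<1$, giving $\rank(\Pi_{\Sn}[\M])>1$. (The closeness hypothesis $\langle\z,\v\rangle^2\ge 1-\tfrac1{2k^2}$ is what makes $\v\v^\top$ a genuine warm start, within Frobenius distance $1/k$ of $\z\z^\top$; the gap argument itself holds for every unit $k$-sparse $\v$.)

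The main obstacle I anticipate is the first part: choosing the free entries of the $\ell_1$-subgradient $\S$ at $\z\z^\top$ so that $\G^{*}$ simultaneously satisfies the optimality inequality and exhibits the eigen-gap. The delicate point is that the $\z_\perp$ direction would carry eigenvalue $-1$ if the $B\times B$ block of $\S$ were left at zero, violating optimality of $\z\z^\top$; one must lift it to $0$ using the all-ones $B\times B$ block, and it is precisely this correction that needs $k\le n/4$. By contrast the second part is largely mechanical once the block-diagonal structure is exposed; the only subtlety there is the strictness $\lambda_1(\M_A)<1+\eta$, handled by the rank-two trace argument together with the separate treatment of $\v=\pm\z$, and the role of $\eta<2/3$ in keeping the $\z_\perp$-eigenvalue $\eta$ strictly below the dominant $A$-block eigenvalue.
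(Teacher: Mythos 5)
Your proof is correct, and while its first half essentially matches the paper's, the second half takes a genuinely different route. For optimality and strict complementarity the paper, like you, exhibits an explicit $\ell_1$-subgradient and invokes \cref{lemma:kkt}; the only difference is the choice of free entries: the paper takes the subgradient $k\z\z^\top + 2k\z_\perp\z_\perp^\top$ of the $\ell_1$-norm, so that the $\z_\perp\z_\perp^\top$ terms cancel and $\G^*=-\tfrac12\z\z^\top$ exactly (legality of that choice is where $k\le n/4$ enters for the paper, via $\tfrac{2k}{n-k}\le 1$), whereas your all-ones $B\times B$ block leaves an extra positive eigenvalue in the $\z_\perp$ direction; both give the gap $\delta=\tfrac12$. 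For the rank claim, the paper first uses the closeness hypothesis to pin down $\sign(\v\v^\top)=k\z\z^\top$, so the step matrix becomes $\v\v^\top+\tfrac{\eta}{2}\z\z^\top+\eta\z_\perp\z_\perp^\top$, and then reasons directly about the water-filling threshold $\lambda$ in \eqref{eq:euclidProj}: from the trace identity $\lambda_1+\lambda_2=1+\tfrac{\eta}{2}$ and $\eta<\tfrac23$ it deduces $\lambda_1-\max\{\lambda_2,\eta\}<1$, which forces two eigenvalues above $\lambda$. You instead isolate the clean equivalence ``$\Pi_{\Sn}[\M]$ has rank one iff $\mu_1-\mu_2\ge1$'' and refute the gap condition by sandwiching $\lambda_1(\M_A)\in(\eta,\,1+\eta)$ with a Rayleigh-quotient bound (where $\eta<\tfrac23$ enters, exactly as in the paper) and a trace/PSD bound, the $B$-block supplying $\mu_2\ge\eta$. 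What your route buys is that the second half never uses the closeness hypothesis --- you keep the generic $\s\s^\top$ rather than substituting $k\z\z^\top$ --- so the failure phenomenon holds for \emph{every} unit vector supported on $\support(\z)$, a mild strengthening; what the paper's route buys is an explicit description of the spectrum of the step matrix. One presentational nit: when you say optimality ``follows from \cref{optCondition}'', you should spell out the standard fact that $\min_{\X\in\Sn}\langle\X,\G^*\rangle=\lambda_n(\G^*)$ is attained at the bottom eigenvector (here $\z$), which is what converts your eigenvalue computation into the required inequality $\langle\X-\z\z^\top,\G^*\rangle\ge0$ for all $\X\in\Sn$.
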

Note that the subgradient of the $\ell_1$-norm which we choose for the projected subgradient step simply corresponds to the sign function, which is arguably the most natural choice.

\begin{proof}

$\z\z^{\top}$ is a rank-one optimal solution for this problem since for the subgradient $k\z\z^{\top}+2k\z_{\perp}\z_{\perp}^{\top}\in\partial \left(\Vert\z\z^{\top}\Vert_1\right)$ the first-order optimality condition holds. Indeed, for all $\X\in\Sn$ 
\begin{align} \label{eq:ExampleFirstOrderOptCondHolds}
\langle\X-\z\z^{\top},-\z\z^{\top}-\z_{\perp}\z_{\perp}^{\top}+\frac{1}{2}\z\z^{\top}+\z_{\perp}\z_{\perp}^{\top}\rangle = \langle\X-\z\z^{\top},-\frac{1}{2}\z\z^{\top}\rangle\ge0.
\end{align}

For the subgradient $-\frac{1}{2}\z\z^{\top}\in\partial g(\z\z^{\top})$ there is a gap $\lambda_{n-1}(-\frac{1}{2}\z\z^{\top})-\lambda_{n}(-\frac{1}{2}\z\z^{\top})=\frac{1}{2}>0$, and as we showed in \eqref{eq:ExampleFirstOrderOptCondHolds} the first order optimality condition holds for $-\frac{1}{2}\z\z^{\top}$. Thus, by \cref{lemma:kkt} the optimal solution $\z\z^{\top}$ satisfies standard strict complementarity. 

We will show that the projection onto the spectrahedron of a subgradient step from $\v\v^{\top}$ with respect to the natural subgradient of the $\ell_1$-norm $\sign(\v\v^{\top})\in\partial \left(\Vert\v\v^{\top}\Vert_1\right)$ returns a rank-2 solution.

It holds that
\begin{align*}
1-\frac{1}{2k^2}\le\langle\z\z^{\top},\v\v^{\top}\rangle=\frac{1}{2}\left(\Vert\z\z^{\top}\Vert_F^2+\Vert\v\v^{\top}\Vert_F^2-\Vert\v\v^{\top}-\z\z^{\top}\Vert_F^2\right) = 1- \frac{1}{2}\Vert\v\v^{\top}-\z\z^{\top}\Vert_F^2,
\end{align*}
and equivalently
\begin{align*}
\sum_{i=1}^k\sum_{j=1}^k\left(\frac{1}{k}-(\v\v^{\top})_{ij}\right)^2=\Vert\v\v^{\top}-\z\z^{\top}\Vert_F^2\le\frac{1}{k^2}.
\end{align*}
Therefore, for every $i,j\in\lbrace 1,\ldots,k\rbrace$ it holds that
\begin{align*}
\left\vert(\v\v^{\top})_{ij}-\frac{1}{k}\right\vert\le\frac{1}{k},
\end{align*}
which implies that $0\le(\v\v^{\top})_{ij}\le\frac{2}{k}$. Therefore, $k\z\z^{\top}=\sign(\v\v^{\top})\in\partial \left(\Vert\v\v^{\top}\Vert_1\right)$.

Taking a projected subgradient step from $\v\v^{\top}$ with respect to the subgradient $-\z\z^{\top}-\z_{\perp}\z_{\perp}^{\top}+\frac{1}{2}\z\z^{\top}\in\partial g(\v\v^{\top})$ has the form
\begin{align*}
&\Pi_{\Sn}\left[\v\v^{\top}-\eta\left(-\z\z^{\top}-\z_{\perp}\z_{\perp}^{\top}+\frac{1}{2}\z\z^{\top}\right)\right]
 = \Pi_{\Sn}\left[\v\v^{\top}+\frac{\eta}{2}\z\z^{\top}+\eta\z_{\perp}\z_{\perp}^{\top}\right].
\end{align*}

Since $\support(\v)\subseteq\support(\z)$ it holds that $\left(\v\v^{\top}+\frac{\eta}{2}\z\z^{\top}\right)\perp\z_{\perp}\z_{\perp}^{\top}$.
$\v\v^{\top}+\frac{\eta}{2}\z\z^{\top}$ is a rank-2 matrix and so we can denote the eigen-decomposition of $\v\v^{\top}+\frac{\eta}{2}\z\z^{\top}+\eta\z_{\perp}\z_{\perp}^{\top}$ as $\v\v^{\top}+\frac{\eta}{2}\z\z^{\top}+\eta\z_{\perp}\z_{\perp}^{\top}=\lambda_1\v_1\v_1^{\top}+\lambda_2\v_2\v_2^{\top}+\eta\z_{\perp}\z_{\perp}^{\top}$, where $\lambda_1\ge\lambda_2$. 
Thus, invoking \eqref{eq:euclidProj} to calculate the projection we need to find the scalar $\lambda\in\reals$ for which the following holds.
\begin{align*}
\max\left\lbrace\lambda_1-\lambda,0\right\rbrace+\max\left\lbrace\lambda_2-\lambda,0\right\rbrace+\max\lbrace\eta-\lambda,0\rbrace
+\sum_{i=4}^n\max \lbrace0-\lambda,0\rbrace = 1.
\end{align*}

$\lambda_1$ is the largest eigenvalue of $\v\v^{\top}+\frac{\eta}{2}\z\z^{\top}+\eta\z_{\perp}\z_{\perp}^{\top}$ since, under our assumption that $\eta < 2/3$, we have that
\[\lambda_1\ge \frac{1}{2}(\lambda_1+\lambda_2)=\frac{1}{2}\trace(\v\v^{\top}+\frac{\eta}{2}\z\z^{\top})=\frac{1}{2}+\frac{\eta}{4}>\eta.\]
Therefore, $\lambda<\lambda_1\le \lambda_1+\lambda_2=\trace(\v\v^{\top}+\frac{\eta}{2}\z\z^{\top})=1+\frac{\eta}{2}$.

In addition, $\max\lbrace\lambda_2,\eta\rbrace\ge\eta$. Therefore, 
\[\lambda_1-\max\lbrace\lambda_2,\eta\rbrace \le \lambda_1 + \lambda_2 - \eta = 1+\frac{\eta}{2}-\eta <1,\]
and so we must have that $\lambda<\max\lbrace\lambda_2,\eta\rbrace\le\lambda_1$. 

This implies that both $\max\left\lbrace\lambda_1-\lambda,0\right\rbrace>0$
and $\max\lbrace\max\lbrace\lambda_2,\eta\rbrace-\lambda,0\rbrace>0$. Thus, using  \eqref{eq:euclidProj} we conclude that 
\[\rank\left(\Pi_{\Sn}\left[\v\v^{\top}-\eta\left(-\z\z^{\top}-\z_{\perp}\z_{\perp}^{\top}+\frac{1}{2}\z\z^{\top}\right)\right]\right)\ge2.\]

\end{proof}

\section{From Nonsmooth to Saddle-Point Problems}\label{sec:smooth2Saddle}

To circumvent the difficulty demonstrated in \cref{lemma:negativeExampleNonSmooth} in incorporating low-rank SVDs into standard subgradient methods for solving Problem \eqref{nonSmoothProblem}, we propose tackling the nonsmooth problem with saddle-point methods.

We assume the nonsmooth Problem \eqref{nonSmoothProblem} can be written as a maximum of smooth functions, i.e., $g(\X)=\max_{\y\in\mathcal{K}}f(\X,\y)$, where $\mathcal{K}\subset\mathbb{Y}$ is some compact and convex subset of the finite linear space over the reals $\mathbb{Y}$ onto which it is efficient to compute Euclidean projections. We assume $f(\cdot,\y)$ is convex for all $\y\in\mathcal{K}$ and $f(\X,\cdot)$ is concave for all $\X\in\Sn$. That is, we rewrite Problem \eqref{nonSmoothProblem} as the following equivalent saddle-point  problem:
\begin{align} \label{problem1}
 \min_{\X\in\mS_n}\max_{\y\in\mathcal{K}}{f(\X,\y)}.
\end{align}

Finding an optimal solution to problem \eqref{problem1} is equivalent to finding a saddle-point $(\X^*,\y^*)\in{\Sn\times\mathcal{K}}$ such that for all $\X\in\Sn$ and $\y\in\mathcal{K}$, 
\begin{align*}
f(\X^*,\y) \le f(\X^*,\y^*) \le f(\X,\y^*).
\end{align*}

We make a standard assumption that $f(\cdot,\cdot)$ is smooth with respect to all the components. That is, we assume there exist $\beta_X,\beta_y,\beta_{Xy},\beta_{yX}\ge0$ such that for any $\X,\tilde{\X}\in\Sn$ and $\y,\tilde{\y}\in\mathcal{K}$ the following four inequalities hold:
\begin{align*}
& \Vert\nabla_{\X}f(\X,\y)-\nabla_{\X}f(\tilde{\X},\y)\Vert_{F} \le\beta_{X}\Vert\X-\tilde{\X}\Vert_{F}, \\
&\Vert\nabla_{\y}f(\X,\y)-\nabla_{\y}f(\X,\tilde{\y})\Vert_{2} \le\beta_{y}\Vert\y-\tilde{\y}\Vert_{2}, \\
& \Vert\nabla_{\X}f(\X,\y)-\nabla_{\X}f(\X,\tilde{\y})\Vert_{F} \le\beta_{Xy}\Vert\y-\tilde{\y}\Vert_{2}, \\
& \Vert\nabla_{\y}f(\X,\y)-\nabla_{\y}f(\tilde{\X},\y)\Vert_{2} \le\beta_{yX}\Vert\X-\tilde{\X}\Vert_{F},
\end{align*}
where $\nabla_{\X}f=\frac{\partial f}{\partial \X}$ and $\nabla_{\y}f=\frac{\partial f}{\partial \y}$.

We denote by $\beta$ the full Lipschitz parameter of the gradient, that is for any $\X,\tilde{\X}\in\Sn$ and $\y,\tilde{\y}\in\mathcal{K}$, 
\begin{align*}
\Vert(\nabla_{\X}f(\X,\y),-\nabla_{\y}f(\X,\y))-(\nabla_{\X}f(\tilde{\X},\tilde{\y}),-\nabla_{\y}f(\tilde{\X},\tilde{\y}))\Vert \le\beta\Vert(\X,\Y)-(\tilde{\X},\tilde{\y})\Vert,
\end{align*}
where $\Vert\cdot\Vert$ denotes the Euclidean norm over the product space $\Sn\times\mathbb{Y}$.


To establish the relationship between $\beta$ and $\beta_X,\beta_y,\beta_{Xy},\beta_{yX}$, we can see that for all $\X,\tilde{\X}\in\Sn$ and all $\y,\tilde{\y}\in\mathcal{K}$
\begin{align*}
& \Vert(\nabla_{\X}f(\X,\y),-\nabla_{\y}f(\X,\y))-(\nabla_{\X}f(\tilde{\X},\tilde{\y}),-\nabla_{\y}f(\tilde{\X},\tilde{\y}))\Vert^2 
\\ & = \Vert\nabla_{\X}f(\X,\y)-\nabla_{\X}f(\tilde{\X},\v)\Vert_F^2 + \Vert\nabla_{\y}f(\X,\y)-\nabla_{\y}f(\tilde{\X},\tilde{\y})\Vert_2^2
\\ & \le 2\Vert\nabla_{\X}f(\X,\y)-\nabla_{\X}f(\tilde{\X},\y)\Vert_F^2 + 2\Vert\nabla_{\X}f(\tilde{\X},\y)-\nabla_{\X}f(\tilde{\X},\tilde{\y})\Vert_F^2 \\ &  \ \ \ + 2\Vert\nabla_{\y}f(\X,\y)-\nabla_{\y}f(\tilde{\X},\y)\Vert_2^2 + 2\Vert\nabla_{\y}f(\tilde{\X},\y)-\nabla_{\y}f(\tilde{\X},\tilde{\y})\Vert_2^2
\\ & \le 2(\beta_{X}^2+\beta_{yX}^2)\Vert\X-\tilde{\X}\Vert_F^2 + 2(\beta_{y}^2+\beta_{Xy}^2)\Vert\y-\tilde{\y}\Vert_2^2
\\ & \le 2\max\lbrace\beta_{X}^2+\beta_{yX}^2,\beta_{y}^2+\beta_{Xy}^2\rbrace\Vert(\X,\y)-(\tilde{\X},\tilde{\y})\Vert^2.
\end{align*}

Therefore, $\beta= \sqrt{2}\max\left\lbrace\sqrt{\beta_{X}^2+\beta_{yX}^2},\sqrt{\beta_{y}^2+\beta_{Xy}^2}\right\rbrace$.

The following lemma highlights a connection between the gradient of a saddle-point of \eqref{problem1} and subgradients of an optimal solution to \eqref{nonSmoothProblem} for which the first order optimality condition holds. One of the connections we will be interested in, is that GSC for Problem \eqref{nonSmoothProblem} implies GSC (with the same parameters) for Problem \eqref{problem1}. However, to prove this specific connection we require an additional structural assumption on the objective function $g(\cdot)$. We note that this assumption holds for all applications mentioned in this paper.
\begin{assumption}\label{ass:struct}
$g(\X)$ is of the form $g(\X) = h(\X) + \max_{\y\in\mK}\y^{\top}(\mA(\X)-\b)$, where $h(\cdot)$ is smooth and convex, and $\mA$ is a linear map.
\end{assumption}

\begin{lemma} \label{lemma:connectionSubgradientNonSmoothAndSaddlePoint}
If $(\X^*,\y^*)$ is a saddle-point of Problem \eqref{problem1} then $\X^*$ is an optimal solution to Problem \eqref{nonSmoothProblem},  $\nabla_{\X}f(\X^*,\y^*)\in\partial g(\X^*)$,  and for all $\X\in\Sn$ it holds that $\langle\X-\X^*,\nabla_{\X}f(\X^*,\y^*)\rangle\ge0$.
Conversely, under \cref{ass:struct}, if $\X^*$ is an optimal solution to Problem \eqref{nonSmoothProblem}, and $\G^*\in\partial{}g(\X^*)$ which satisfies $\langle\X-\X^*,\G^*\rangle\ge0$  for all $\X\in\Sn$, then there exists $\y^*\in\argmax_{\y\in\mathcal{K}}f(\X^*,\y)$ such that $(\X^*,\y^*)$ is a saddle-point of Problem \eqref{problem1}, and $\nabla_{\X}f(\X^*,\y^*)=\G^*$.
\end{lemma}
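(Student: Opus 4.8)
The plan is to prove the two implications separately. The forward direction uses only the saddle structure $g(\X)=\max_{\y\in\mK}f(\X,\y)$ and requires no extra assumption, while the converse leans on \cref{ass:struct}.

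For the forward direction, I would assume $(\X^*,\y^*)$ is a saddle-point, so $f(\X^*,\y)\le f(\X^*,\y^*)\le f(\X,\y^*)$ for all $\X\in\Sn$, $\y\in\mK$. First, optimality of $\X^*$ for \eqref{nonSmoothProblem} follows from the chain $g(\X)=\max_{\y}f(\X,\y)\ge f(\X,\y^*)\ge f(\X^*,\y^*)=\max_{\y}f(\X^*,\y)=g(\X^*)$, where the middle step is the right saddle inequality and the final equality uses that the left saddle inequality makes $\y^*$ a maximizer of $f(\X^*,\cdot)$. Second, for the membership $\nabla_{\X}f(\X^*,\y^*)\in\partial g(\X^*)$, I would chain the bound $g(\X)\ge f(\X,\y^*)$ with convexity of $f(\cdot,\y^*)$: for any $\X$, $g(\X)\ge f(\X,\y^*)\ge f(\X^*,\y^*)+\langle\nabla_{\X}f(\X^*,\y^*),\X-\X^*\rangle=g(\X^*)+\langle\nabla_{\X}f(\X^*,\y^*),\X-\X^*\rangle$, which is exactly the subgradient inequality. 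Third, the right saddle inequality states that $\X^*$ minimizes the smooth convex function $f(\cdot,\y^*)$ over $\Sn$, so \cref{optCondition} yields $\langle\X-\X^*,\nabla_{\X}f(\X^*,\y^*)\rangle\ge0$ for all $\X\in\Sn$.

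For the converse, under \cref{ass:struct} we have $f(\X,\y)=h(\X)+\y^{\top}(\mA(\X)-\b)$, hence $\nabla_{\X}f(\X,\y)=\nabla h(\X)+\mA^*(\y)$, where $\mA^*$ is the adjoint of $\mA$, and $w(\X):=\max_{\y\in\mK}\y^{\top}(\mA(\X)-\b)$ is a maximum of functions affine in $\X$. The central step is to characterize $\partial w(\X^*)$: by Danskin-type subdifferential calculus over the compact convex set $\mK$, $\partial w(\X^*)=\{\mA^*(\y)\mid \y\in Y^*\}$, where $Y^*:=\argmax_{\y\in\mK}\y^{\top}(\mA(\X^*)-\b)=\argmax_{\y\in\mK}f(\X^*,\y)$ (the $h(\X^*)$ term is constant in $\y$). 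Since $h$ is smooth, the sum rule gives $\partial g(\X^*)=\nabla h(\X^*)+\partial w(\X^*)$, so the hypothesis $\G^*\in\partial g(\X^*)$ produces some $\y^*\in Y^*$ with $\G^*=\nabla h(\X^*)+\mA^*(\y^*)=\nabla_{\X}f(\X^*,\y^*)$. This simultaneously gives the required equality $\nabla_{\X}f(\X^*,\y^*)=\G^*$ and the membership $\y^*\in\argmax_{\y}f(\X^*,\y)$. I would then verify the saddle inequalities: the left one, $f(\X^*,\y)\le f(\X^*,\y^*)$, is immediate from $\y^*\in Y^*$; the right one follows because $\nabla_{\X}f(\X^*,\y^*)=\G^*$ satisfies $\langle\X-\X^*,\G^*\rangle\ge0$, so by \cref{optCondition} $\X^*$ minimizes the convex $f(\cdot,\y^*)$ over $\Sn$, i.e.\ $f(\X^*,\y^*)\le f(\X,\y^*)$.

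I expect the main obstacle to be the subdifferential characterization of $w$ — specifically, justifying that $\partial w(\X^*)$ equals exactly $\{\mA^*(\y)\mid \y\in Y^*\}$ rather than a strictly larger set. This requires invoking the Danskin/max-function subdifferential rule over a compact set and then arguing that the convex-hull operation in that formula is redundant here, because $Y^*$ is convex (it is the argmax of a linear functional over the convex set $\mK$) and $\y\mapsto\mA^*(\y)$ is linear, so its image of $Y^*$ is already convex. Everything else reduces to the defining saddle inequalities and the first-order optimality condition of \cref{optCondition}.
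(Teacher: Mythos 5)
Your proposal is correct and follows essentially the same route as the paper's proof: the forward direction via the saddle inequalities, the gradient inequality for $f(\cdot,\y^*)$, and first-order optimality of $\X^*$ for $\min_{\X\in\Sn}f(\X,\y^*)$; the converse via Danskin's theorem under \cref{ass:struct}, with the same key observation that the convex hull in the max-function subdifferential formula is redundant because the argmax set is convex and the adjoint map is linear. The only cosmetic difference is that you verify the right saddle inequality by invoking \cref{optCondition}, while the paper chains the variational inequality with the gradient inequality directly — these are equivalent one-line arguments.
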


The proof is given in \cref{sec:proofSubgradEquiv}. 
The connection between the gradient of an optimal solution to the saddle-point problem and a subgradient of a corresponding optimal solution in the equivalent nonsmooth problem established in \cref{lemma:connectionSubgradientNonSmoothAndSaddlePoint}, naturally
leads to the formulation of the following generalized strict complementarity assumption for saddle-point problems.

\begin{assumption}[generalized strict complementarity for saddle-points]\label{ass:strictcompSaddlePoint}
We say a saddle-point $(\X^*,\y^*)\in\Sn\times\mathcal{K}$ of Problem \eqref{problem1} with $\rank(\X^*)=r^*$ satisfies the generalized strict complementarity assumption with parameters $r\geq r^*, \delta>0$, if $\lambda_{n-r}(\nabla_{\X}f(\X^*,\y^*))-\lambda_{n}(\nabla_{\X}f(\X^*,\y^*))\ge\delta$.
\end{assumption}

\begin{remark}
Note that under \cref{ass:struct}, due to Lemma \ref{lemma:connectionSubgradientNonSmoothAndSaddlePoint},  GSC with parameters $r,\delta$ for some optimal solution $\X^*$ to Problem \eqref{nonSmoothProblem} implies GSC with parameters $r,\delta$ to a corresponding saddle-point $(\X^*,\y^*)$ of Problem  \eqref{problem1}. Nevertheless, \cref{ass:struct} is not necessary for proving our convergence results for Problem \eqref{problem1}, which are directly stated in terms of Assumption \ref{ass:strictcompSaddlePoint}. 
\end{remark}

\section{Projected Extragradient Method with Low-Rank Projections}

In this section we formally state and prove our main result: the projected extragradient method for the saddle-point Problem \eqref{problem1}, when initialized in the proximity of a saddle-point which satisfies GSC (\cref{ass:strictcompSaddlePoint}), converges with its original $O(1/t)$ rate while requiring only two low-rank SVD computations per iteration.

\begin{algorithm}[H]
	\caption{Projected extragradient method for saddle-point problems (see also \cite{extragradientK,Nemirovski})}\label{alg:EG}
	\begin{algorithmic}
		\STATE \textbf{Input:} sequence of step-sizes $\{\eta_t\}_{t\geq 1}$ 
		\STATE \textbf{Initialization:} $(\X_1,\y_1)\in\Sn\times\mathcal{K}$
		\FOR{$t = 1,2,...$} 
            \STATE $\Z_{t+1}=\Pi_{\Sn}[\X_t-\eta_t\nabla_{\X}f(\X_t,\y_t)]$   
			\STATE $\w_{t+1}=\Pi_{\mathcal{K}}[\y_t+\eta_t\nabla_{\y}f(\X_t,\y_t)]$
			\STATE $\X_{t+1}=\Pi_{\Sn}[\X_t-\eta_t\nabla_{\X}f(\Z_{t+1},\w_{t+1})]$   
			\STATE $\y_{t+1}=\Pi_{\mathcal{K}}[\y_t+\eta_t\nabla_{\y}f(\Z_{t+1},\w_{t+1})]$
        \ENDFOR
	\end{algorithmic}
\end{algorithm}

First, in the following lemma we state the standard convergence result of the projected extragradient method, which is a well known result\footnote{\cite{sebastienMD,Nemirovski} prove this result with respect to the ergodic series. A small adjustment of the proof proves the same with respect to the minimum and maximum iterates.}. For completeness we include the proof in \cref{appendixConvergenceProof}.


\begin{lemma} \label{lemma:EGconvergenceRate}
Let $\lbrace(\X_t,\y_t)\rbrace_{t\ge1}$ and $\lbrace(\Z_{t},\w_t)\rbrace_{t\ge2}$ be the sequences generated by \cref{alg:EG} with a fixed step-size $\eta_t=\eta\le\min\left\lbrace\frac{1}{\beta_{X}+\beta_{Xy}},\frac{1}{\beta_{y}+\beta_{yX}},\frac{1}{\beta_{X}+\beta_{yX}},\frac{1}{\beta_{y}+\beta_{Xy}}\right\rbrace$ then
\begin{align*}
\max_{\y\in\mathcal{K}} f\left(\frac{1}{T}\sum_{t=1}^T \Z_{t+1},\y\right) -  \min_{\X\in\Sn} f\left(\X,\frac{1}{T}\sum_{t=1}^T\w_{t+1}\right) & \le \frac{D^2}{2\eta T},
\end{align*}
where $D:=\sup_{(\X,\y),(\tilde{\X},\tilde{\y})\in{\Sn\times\mathcal{K}}}\Vert(\X,\y)-(\tilde{\X},\tilde{\y})\Vert$. 
\end{lemma}



We can now state our main theorem.

\begin{theorem}[main theorem] \label{theroem:allPutTogether}
Fix an optimal solution $(\X^*,\y^*)\in{\Sn\times\mathcal{K}}$ to Problem \eqref{problem1}. Let $\tilde{r}$ denote the multiplicity of $\lambda_{n}(\nabla_{\X}f(\X^*,\y^*))$ and for any $r\ge \tilde{r}$ define $\delta(r)=\lambda_{n-r}(\nabla_{\X}f(\X^*,\y^*)-\lambda_{n}(\nabla_{\X}f(\X^*,\y^*)$. Let $\lbrace(\X_t,\y_t)\rbrace_{t\ge1}$ and $\lbrace(\Z_{t},\w_t)\rbrace_{t\ge2}$ be the sequences of iterates generated by \cref{alg:EG} with a fixed step-size \[ \eta=\min\Bigg\lbrace\frac{1}{2\sqrt{\beta_X^2+\beta_{yX}^2}},\frac{1}{2\sqrt{\beta_y^2+\beta_{Xy}^2}},\frac{1}{\beta_{X}+\beta_{Xy}},\frac{1}{\beta_y+\beta_{yX}}\Bigg\rbrace. \] 
Assume the initialization $(\X_1,\y_1)$ satisfies $\Vert(\X_1,\y_1)-(\X^*,\y^*)\Vert\le R_0(r)$, where
\begin{align*}
R_0(r):= \frac{\eta}{(1+\sqrt{2})\left(1+(2+\sqrt{2})\eta\max\lbrace \beta_{X},\beta_{Xy}\rbrace\right)}\max\Bigg\{\frac{\sqrt{\tilde{r}}\delta(r-\tilde{r}+1)}{2},\frac{\delta(r)}{(1+1/\sqrt{\tilde{r}})}\Bigg\}.
\end{align*}
Then, for all $t\ge1$, the projections $\Pi_{\Sn}[\X_t-\eta\nabla_{\X}f(\X_t,\y_t)]$ and $\Pi_{\Sn}[\X_t-\eta\nabla_{\X}f(\Z_{t+1},\w_{t+1})]$ can be replaced with their rank-r truncated counterparts (see \eqref{truncatedProjection}) without changing the sequences $\lbrace(\X_t,\y_t)\rbrace_{t\ge1}$ and $\lbrace(\Z_{t},\w_t)\rbrace_{t\ge2}$, and for any $T\ge0$ it holds that
\begin{align*}
& \max_{\y\in\mathcal{K}} f\left(\frac{1}{T}\sum_{t=1}^T \Z_{t+1},\y\right) -  \min_{\X\in\Sn} f\left(\X,\frac{1}{T}\sum_{t=1}^T\w_{t+1}\right)
\\ & \le \frac{D^2\max{\left\lbrace\sqrt{\beta_X^2+\beta_{yX}^2},\sqrt{\beta_y^2+\beta_{Xy}^2},\frac{1}{2}(\beta_{X}+\beta_{Xy}),\frac{1}{2}(\beta_y+\beta_{yX})\right\rbrace}}{T},
\end{align*} 
where $D:=\sup_{(\X,\y),(\Z,\w)\in{\Sn\times\mathcal{K}}}\Vert(\X,\y)-(\Z,\w)\Vert$.
\end{theorem}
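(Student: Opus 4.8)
The plan is to reduce the entire statement to a single rank claim: that under the stated initialization, at every iteration both matrices fed into $\Pi_{\Sn}$ — namely $\X_t-\eta\nabla_{\X}f(\X_t,\y_t)$ (producing $\Z_{t+1}$) and $\X_t-\eta\nabla_{\X}f(\Z_{t+1},\w_{t+1})$ (producing $\X_{t+1}$) — have Euclidean projection onto $\Sn$ of rank at most $r$. By the thresholding structure of the projection in \eqref{eq:euclidProj}, whenever the projection has rank at most $r$ only the top-$r$ eigencomponents of the input are needed, so the full projection coincides with the rank-$r$ truncated projection \eqref{truncatedProjection}. Once this is shown, the full-rank and truncated methods generate identical sequences $\lbrace(\X_t,\y_t)\rbrace$ and $\lbrace(\Z_t,\w_t)\rbrace$, and the convergence bound is immediate from \cref{lemma:EGconvergenceRate}: the chosen $\eta$ satisfies that lemma's step-size requirement, and $1/(2\eta)$ equals precisely the maximum appearing in the stated rate. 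Thus the whole theorem rests on the rank claim, which I would establish by an induction that chains together three ingredients.

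First I would show the full-rank iterates never leave the ball of radius $R_0(r)$ around $(\X^*,\y^*)$. This is the standard Fejér-type monotonicity of the extragradient method for the monotone operator induced by the convex–concave $f$: the step-size bounds $\eta\le \frac{1}{2\sqrt{\beta_X^2+\beta_{yX}^2}}$ and $\eta\le\frac{1}{2\sqrt{\beta_y^2+\beta_{Xy}^2}}$ (equivalently $\eta\le 1/(\sqrt2\beta)$) give $\Vert(\X_{t+1},\y_{t+1})-(\X^*,\y^*)\Vert\le\Vert(\X_t,\y_t)-(\X^*,\y^*)\Vert$, the very inequality underlying the proof of \cref{lemma:EGconvergenceRate}. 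Hence $\Vert(\X_t,\y_t)-(\X^*,\y^*)\Vert\le R_0(r)$ for all $t$.

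Second I would translate this into a Frobenius bound on the distance of each projection input from the reference matrix $\M^*:=\X^*-\eta\nabla_{\X}f(\X^*,\y^*)$. Since a projected-gradient step from the saddle point reproduces it, $\Pi_{\Sn}[\M^*]=\X^*$, a matrix of rank $r^*\le\tilde r\le r$ (the inequality $r^*\le\tilde r$ following from \cref{lemma:Xopt-subgradEigen}, which places the eigenvectors of $\X^*$ among the leading eigenvectors of $-\nabla_{\X}f(\X^*,\y^*)$). Using non-expansiveness of $\Pi_{\Sn}$, the triangle inequality, and the component smoothness constants $\beta_X,\beta_{Xy}$, I would bound $\Vert\Z_{t+1}-\X^*\Vert$ and $\Vert\w_{t+1}-\y^*\Vert$ in terms of $\Vert(\X_t,\y_t)-(\X^*,\y^*)\Vert$; feeding these into the bound for the second input introduces the nested factors $(1+\sqrt2)$ and $(2+\sqrt2)\eta\max\lbrace\beta_X,\beta_{Xy}\rbrace$, yielding $\Vert\M-\M^*\Vert_F\le(1+\sqrt2)\bigl(1+(2+\sqrt2)\eta\max\lbrace\beta_X,\beta_{Xy}\rbrace\bigr)R_0(r)$ for both inputs $\M$, which by the definition of $R_0(r)$ equals $\eta\max\lbrace\tfrac{\sqrt{\tilde r}\delta(r-\tilde r+1)}{2},\tfrac{\delta(r)}{1+1/\sqrt{\tilde r}}\rbrace$.

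The main obstacle is the final eigen-gap argument: showing that $\Vert\M-\M^*\Vert_F\le\rho$, with $\rho$ as above, forces $\Pi_{\Sn}[\M]=\widehat{\Pi}_{\Sn}^r[\M]$. Here the thresholding scalar of $\M^*$ equals $-\eta\lambda_{n}(\nabla_{\X}f(\X^*,\y^*))$, and the $(r+1)$-th largest eigenvalue of $\M^*$ lies a margin of $\eta\delta(r)$ below it. By Weyl's inequality the eigenvalues of $\M$ move by at most $\rho$, while the thresholding scalar defined implicitly in \eqref{eq:euclidProj} moves by at most $\rho/\sqrt{m}$, where $m\ge\tilde r$ is the number of eigenvalues above the threshold — a Cauchy–Schwarz estimate on the trace-one threshold equation. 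Requiring that the perturbed $(r+1)$-th eigenvalue stay below the perturbed threshold then caps $\rho$; balancing the two ways of exploiting the multiplicity $\tilde r$ (a crude margin argument versus the refined $\sqrt{\tilde r}$ threshold-shift bound) produces the two candidates inside the maximum defining $R_0(r)$, and one keeps the larger admissible radius. This threshold-perturbation bookkeeping, tracking $\tilde r$ and $\delta(r)$ simultaneously, is the delicate part that fixes the exact constants; everything else is routine application of non-expansiveness, smoothness, and the known extragradient analysis.
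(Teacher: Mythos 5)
Your overall architecture coincides with the paper's: reduce everything to a rank claim, use extragradient non-expansiveness (the paper's \cref{lemma:convergenceOfXYZWsequences}) to confine the iterates, pass to the reference matrix $\M^*=\X^*-\eta\nabla_{\X}f(\X^*,\y^*)$ whose spectrum is pinned down via \cref{lemma:Xopt-subgradEigen}, and control both projection inputs through the smoothness constants $\beta_X,\beta_{Xy}$. Your threshold-perturbation phrasing of the eigen-gap step is a legitimate equivalent of the paper's route (the paper instead verifies the sufficient condition $\sum_{i=1}^{r}\lambda_i(\P)-r\lambda_{r+1}(\P)\ge 1$ via Ky Fan's inequality, which plays exactly the role of your threshold-shift bound), and it does recover the candidate $\frac{\delta(r)}{1+1/\sqrt{\tilde r}}$ with the correct constant. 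The genuine gap is the other candidate: nothing in your sketch can produce $\frac{\sqrt{\tilde r}\,\delta(r-\tilde r+1)}{2}$. Every quantity you introduce in the final step --- the margin $\eta\delta(r)$ between $\lambda_{r+1}(\M^*)$ and the threshold, the Weyl bound $\lambda_{r+1}(\M)\le\lambda_{r+1}(\M^*)+\rho$, and a threshold shift of at most $\rho/\sqrt{m}$ --- is proportional to $\delta(r)$, so any balancing of them yields a radius proportional to $\delta(r)$; the shifted gap $\delta(r-\tilde r+1)$ never enters. The missing idea (the paper's inequality \eqref{ineq:Rplus1bound2}) is the generalized Weyl inequality with index splitting, $\lambda_{r+1}(\M)\le\lambda_{r-\tilde r+2}(\M^*)+\lambda_{\tilde r}(\M-\M^*)$, combined with $\lambda_{\tilde r}(\M-\M^*)\le\Vert\M-\M^*\Vert_F/\sqrt{\tilde r}$: there the multiplicity dampens the \emph{eigenvalue} perturbation rather than the threshold shift, at the price of measuring the margin from $\lambda_{r-\tilde r+2}(\M^*)$, i.e., via $\delta(r-\tilde r+1)$. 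Since $R_0(r)$ is defined as the \emph{maximum} of the two candidates, the theorem admits initializations in the larger ball, and whenever $\sqrt{\tilde r}\,\delta(r-\tilde r+1)/2>\delta(r)/(1+1/\sqrt{\tilde r})$ your argument does not cover them; this candidate is precisely the one driving the paper's rank-versus-radius trade-off, so without it the statement as given is unproven.

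A secondary, fixable imprecision: you take the threshold shift to be at most $\rho/\sqrt{m}$ with ``$m\ge\tilde r$ the number of eigenvalues above the threshold.'' For $\M^*$ the number of eigenvalues \emph{strictly} above its threshold $-\eta\lambda_n(\nabla_{\X}f(\X^*,\y^*))$ is $\rank(\X^*)$, which by \cref{lemma:Xopt-subgradEigen} is only known to satisfy $\rank(\X^*)\le\tilde r$; the eigenvalues with indices $\rank(\X^*)+1,\dots,\tilde r$ sit exactly at the threshold. To obtain the $1/\sqrt{\tilde r}$ factor you must include these at-threshold eigenvalues in the averaging set and argue one-sidedly (their excess is zero, so the comparison $1\ge\sum_{i\le\tilde r}\left(\lambda_i(\M)-\lambda(\M)\right)$ still closes); averaging only over the strictly-above set gives the weaker $\rho/\sqrt{\rank(\X^*)}$ and hence the worse constant $1+1/\sqrt{\rank(\X^*)}$ in place of $1+1/\sqrt{\tilde r}$.
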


\begin{remark}
Note that \cref{theroem:allPutTogether} implies that if standard strict complementarity holds for Problem \eqref{problem1}, that is \cref{ass:strictcompSaddlePoint} holds with $r=r^*=\rank(\X^*)$ and some $\delta>0$, then only rank-$r^*$ SVDs are required  so that Algorithm \ref{alg:EG} converges with the guaranteed  convergence rate of $O(1/t)$, when initialized with a ``warm-start''. Furthermore, by using SVDs of rank $r>r^*$, with moderately higher values of $r$, we can increase the radius of the ball in which Algorithm \ref{alg:EG} needs to be initialized quite significantly.
\end{remark}

To prove \cref{theroem:allPutTogether} we first prove two technical lemmas. We begin by proving that the iterates of Algorithm \ref{alg:EG} always remain inside a ball of a certain radius around an optimal solution.

\begin{lemma} \label{lemma:convergenceOfXYZWsequences}
Let $\lbrace(\X_t,\y_t)\rbrace_{t\ge1}$ and $\lbrace(\Z_{t},\w_t)\rbrace_{t\ge2}$ be the sequences generated by \cref{alg:EG} with a step-size $\eta_t\le\frac{1}{\beta}$, and let $(\X^*,\y^*)$ be some optimal solution to Problem \eqref{problem1}. Then for all $t\ge1$ it holds that 
\begin{align*}
\Vert(\X_{t+1},\y_{t+1})-(\X^*,\y^*)\Vert & \le \Vert(\X_{t},\y_t)-(\X^*,\y^*)\Vert,
\\ \Vert(\Z_{t+1},\w_{t+1})-(\X^*,\y^*)\Vert & \le \Bigg(1+\frac{1}{\sqrt{1-\eta_{t}^2\beta^2}}\Bigg)\Vert(\X_t,\y_t)-(\X^*,\y^*)\Vert.
\end{align*}
\end{lemma}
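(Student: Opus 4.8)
The plan is to bound the two quantities separately using the nonexpansiveness of Euclidean projections onto convex sets together with the standard ``firmly nonexpansive / monotone operator'' structure of the extragradient step.

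First I would handle the easier bound on $\Vert(\Z_{t+1},\w_{t+1})-(\X^*,\y^*)\Vert$. Recall $(\Z_{t+1},\w_{t+1})$ is obtained from $(\X_t,\y_t)$ by a single projected gradient step of the saddle-point operator $F(\X,\y):=(\nabla_{\X}f(\X,\y),-\nabla_{\y}f(\X,\y))$, i.e. $(\Z_{t+1},\w_{t+1})=\Pi_{\Sn\times\mathcal{K}}[(\X_t,\y_t)-\eta_t F(\X_t,\y_t)]$, where I use that projecting onto the product $\Sn\times\mathcal{K}$ decomposes into the two separate projections appearing in Algorithm \ref{alg:EG}. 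Since $(\X^*,\y^*)=\Pi_{\Sn\times\mathcal{K}}[(\X^*,\y^*)-\eta_t F(\X^*,\y^*)]$ by optimality, nonexpansiveness of the projection gives
\begin{align*}
\Vert(\Z_{t+1},\w_{t+1})-(\X^*,\y^*)\Vert \le \Vert(\X_t,\y_t)-(\X^*,\y^*)-\eta_t(F(\X_t,\y_t)-F(\X^*,\y^*))\Vert.
\end{align*}
I would then expand the square and bound $\Vert F(\X_t,\y_t)-F(\X^*,\y^*)\Vert\le\beta\Vert(\X_t,\y_t)-(\X^*,\y^*)\Vert$, drop the (nonnegative-after-monotonicity, or simply Cauchy--Schwarz–bounded) cross term, to obtain a factor like $\sqrt{1+\eta_t^2\beta^2}$; a cleaner route giving exactly the stated constant is to triangle-inequality the right-hand side as $\Vert(\X_t,\y_t)-(\X^*,\y^*)\Vert+\eta_t\beta\Vert(\X_t,\y_t)-(\X^*,\y^*)\Vert$ and massage $\eta_t\beta$ against the $\frac{1}{\sqrt{1-\eta_t^2\beta^2}}$ target; I expect that reconciling the exact constant $1+1/\sqrt{1-\eta_t^2\beta^2}$ is where a little care is needed, and it likely comes from combining the triangle inequality with a bound $\eta_t\beta\le 1/\sqrt{1-\eta_t^2\beta^2}$ valid under $\eta_t\le 1/\beta$.

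For the harder, \emph{contractive} bound on $\Vert(\X_{t+1},\y_{t+1})-(\X^*,\y^*)\Vert$, I would use the defining property of the extragradient update: $(\X_{t+1},\y_{t+1})=\Pi_{\Sn\times\mathcal{K}}[(\X_t,\y_t)-\eta_t F(\Z_{t+1},\w_{t+1})]$. The standard argument relies on the obtuse-angle (variational) characterization of the projection, namely $\langle u-\Pi_C[v],\,v-\Pi_C[v]\rangle\le 0$ for all $u\in C$, applied twice — once at the $(\X_{t+1},\y_{t+1})$ step and once at the $(\Z_{t+1},\w_{t+1})$ step. Writing $p=(\X_t,\y_t)$, $p^+=(\X_{t+1},\y_{t+1})$, $z=(\Z_{t+1},\w_{t+1})$, $p^*=(\X^*,\y^*)$, one derives the classical three-point inequality
\begin{align*}
\Vert p^+ - p^*\Vert^2 \le \Vert p - p^*\Vert^2 - \Vert p - z\Vert^2 - \Vert z - p^+\Vert^2 + 2\eta_t\langle F(z)-F(p),\, z-p^+\rangle,
\end{align*}
using monotonicity $\langle F(z)-F(p^*),z-p^*\rangle\ge 0$ to kill the term involving $p^*$. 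Then I would bound the remaining inner product by Cauchy--Schwarz and Lipschitzness, $2\eta_t\langle F(z)-F(p),z-p^+\rangle\le \eta_t^2\beta^2\Vert z-p\Vert^2+\Vert z-p^+\Vert^2$, so that the two negative squared-norm terms absorb it and, under $\eta_t\le 1/\beta$ (hence $\eta_t^2\beta^2\le 1$), everything past $\Vert p-p^*\Vert^2$ becomes nonpositive. This yields $\Vert p^+-p^*\Vert\le\Vert p-p^*\Vert$, the claimed monotone decrease.

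The main obstacle is organizing the projection inequalities correctly so the $p^*$-dependent cross term is exactly the one controlled by monotonicity of $F$ (which holds because $f$ is convex-concave, so $F$ is a monotone operator), and then verifying the step-size condition $\eta_t\le 1/\beta$ is precisely what makes the leftover terms nonpositive rather than merely small. I would also double-check that the product-space projection genuinely decomposes into the $\Pi_{\Sn}$ and $\Pi_{\mathcal{K}}$ steps of Algorithm \ref{alg:EG} (true because the constraint set is a Cartesian product), so that the abstract one-operator extragradient analysis applies verbatim; this is routine but worth stating explicitly. The rest is bookkeeping with norms, and I would keep the two displayed bounds independent so the first can be proved by the short nonexpansiveness argument and the second by the full three-point analysis.
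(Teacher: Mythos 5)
Your proposal is correct, and it reaches the two bounds by a partially different route than the paper. The paper's proof is a two-line consequence of a single cited extragradient inequality (Lemma 12.1.10 of its reference \cite{nonexpansiveProof}):
\begin{align*}
\Vert(\X_{t+1},\y_{t+1})-(\X^*,\y^*)\Vert^2 \le \Vert(\X_{t},\y_t)-(\X^*,\y^*)\Vert^2 - (1-\eta_t^2\beta^2)\Vert(\X_t,\y_t)-(\Z_{t+1},\w_{t+1})\Vert^2,
\end{align*}
from which the monotone decrease is immediate, and the $(\Z_{t+1},\w_{t+1})$ bound follows by rearranging the same inequality into $\Vert(\X_t,\y_t)-(\Z_{t+1},\w_{t+1})\Vert \le (1-\eta_t^2\beta^2)^{-1/2}\Vert(\X_t,\y_t)-(\X^*,\y^*)\Vert$ and applying the triangle inequality. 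Your three-point analysis for the first bound (projection characterization applied twice, monotonicity of $F$ to kill the $p^*$ term, then Cauchy--Schwarz/Young) re-derives exactly this cited inequality, so that part is the same mathematics made self-contained. Where you genuinely diverge is the second bound: instead of extracting $\Vert(\X_t,\y_t)-(\Z_{t+1},\w_{t+1})\Vert$ from the EG inequality, you use the fixed-point characterization $(\X^*,\y^*)=\Pi_{\Sn\times\mathcal{K}}[(\X^*,\y^*)-\eta_t F(\X^*,\y^*)]$ — valid because the saddle point solves the variational inequality for the monotone operator $F$ — together with nonexpansiveness of the projection and $\beta$-Lipschitzness of $F$, giving the constant $1+\eta_t\beta$; since $\eta_t\beta\le 1\le (1-\eta_t^2\beta^2)^{-1/2}$, this implies the stated bound (and is in fact tighter). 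What your route buys: the second bound is decoupled from the EG machinery entirely and comes with a sharper constant; what the paper's route buys: brevity, since one known inequality yields both claims at once. The two facts you flag as needing explicit verification — that the product projection onto $\Sn\times\mathcal{K}$ splits into the separate $\Pi_{\Sn}$ and $\Pi_{\mathcal{K}}$ steps of Algorithm \ref{alg:EG}, and that convexity-concavity of $f$ makes $F$ monotone — are indeed exactly what is needed and are both standard, so your argument is complete.
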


\begin{proof}

A known inequality of the EG algorithm (see for example Lemma 12.1.10 in \cite{nonexpansiveProof}) is
\begin{align} \label{ineq:EGconvergenceOfSeries}
& \Vert(\X_{t+1},\y_{t+1})-(\X^*,\y^*)\Vert^2 \nonumber
\\& \le \Vert(\X_{t},\y_t)-(\X^*,\y^*)\Vert^2 - (1-\eta_t^2\beta^2)\Vert(\X_t,\y_t)-(\Z_{t+1},\w_{t+1})\Vert^2.
\end{align}

Since $\eta_t^2\beta^2\le1$ it follows that
\begin{align*}
\Vert(\X_{t+1},\y_{t+1})-(\X^*,\y^*)\Vert \le \Vert(\X_{t},\y_t)-(\X^*,\y^*)\Vert.
\end{align*}

In addition, using \eqref{ineq:EGconvergenceOfSeries}
\begin{align*}
\Vert(\X_t,\y_t)-(\Z_{t+1},\w_{t+1})\Vert & \le \sqrt{(1-\eta_t^2\beta^2)^{-1}}\Vert(\X_{t},\y_t)-(\X^*,\y^*)\Vert.
\end{align*}

Therefore,
\begin{align*}
\Vert(\Z_{t+1},\w_{t+1})-(\X^*,\y^*)\Vert & \le \Vert(\Z_{t+1},\w_{t+1})-(\X_{t},\y_{t})\Vert+\Vert(\X_{t},\y_{t})-(\X^*,\y^*)\Vert
\\ & \le \sqrt{(1-\eta_{t}^2\beta^2)^{-1}}\Vert(\X_{t},\y_{t})-(\X^*,\y^*)\Vert \\
& \ \ \ +\Vert(\X_{t},\y_{t})-(\X^*,\y^*)\Vert
\\ & = \Bigg(1+\frac{1}{\sqrt{1-\eta_{t}^2\beta^2}}\Bigg)\Vert(\X_{t},\y_{t})-(\X^*,\y^*)\Vert.
\end{align*}

\end{proof}

We now prove that when close enough to a low-rank saddle-point of Problem \eqref{problem1}, under an assumption of an eigen-gap in the gradient of the saddle-point, both projections onto the spectrahedron that are necessary in each iteration of \cref{alg:EG}, result in low-rank matrices. 

\begin{lemma}
\label{lemma:radiusOfLowRankProjections}
Let $(\X^*,\y^*)$ be an optimal solution to Problem \eqref{problem1}. Let $\tilde{r}$ denote the multiplicity of $\lambda_{n}(\nabla_{\X}f(\X^*,\y^*))$ and for any $r\ge \tilde{r}$ denote $\delta(r):=\lambda_{n-r}(\nabla_{\X}f(\X^*,\y^*))-\lambda_{n}(\nabla_{\X}f(\X^*,\y^*))$.
Then, for any $\eta\ge0$ and $(\X,\y)\in\mathcal{S}_n\times\mathcal{K}$, if
\begin{align*}
& \Vert(\X,\y)-(\X^*,\y^*)\Vert
\\ & \le \frac{\eta}{1+\sqrt{2}\eta\max\lbrace \beta_{X},\beta_{Xy}\rbrace\left(1+\frac{1}{\sqrt{1-\eta^2\beta^2}}\right)}\max\left\lbrace\frac{\sqrt{\tilde{r}}\delta(r-\tilde{r}+1)}{2},\frac{\delta(r)}{(1+1/\sqrt{\tilde{r}})}\right\rbrace
\end{align*}
then $\rank\left( \Pi_{\mathcal{S}_n}[\X-\eta \nabla_{\X}f(\X,\y)]\right)\le r$ and $\rank\left( \Pi_{\mathcal{S}_n}[\X-\eta \nabla_{\X}f(\Z_+,\w_+)]\right)\le r$ where $\Z_+=\Pi_{\Sn}[\X-\eta\nabla_{\X}f(\X,\y)]$ and $\w_+=\Pi_{\mathcal{K}}[\y-\eta\nabla_{\y}f(\X,\y)]$. 
\end{lemma}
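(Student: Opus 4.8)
The plan is to bound how far the gradient $\nabla_{\X}f(\X,\y)$ can deviate from the reference gradient $\G^* := \nabla_{\X}f(\X^*,\y^*)$, and then invoke a perturbation argument for eigenvalues to conclude that the truncated projection coincides with the full projection. The key structural fact, inherited from \cref{lemma:Xopt-subgradEigen} and the theory recalled around \eqref{eq:euclidProj}, is that $\G^*$ has an eigen-gap: its smallest eigenvalue has multiplicity $\tilde r$, and $\lambda_{n-r}(\G^*)-\lambda_n(\G^*)\ge\delta(r)$. When a symmetric matrix $\M$ is close to $\G^*$ in Frobenius norm, Weyl's inequality guarantees its spectrum is close to that of $\G^*$ entrywise, so $\M$ also exhibits a gap after its $(n-r)$-th eigenvalue provided the perturbation is smaller than roughly $\delta(r)/2$. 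Translating this gap in the shifted matrix $\X-\eta\M$ into the statement that the Euclidean projection onto $\Sn$ has rank at most $r$ is exactly the mechanism described after \eqref{truncatedProjection}: the projection truncates all eigenvalues below the threshold $\lambda$, and a sufficiently large spectral gap forces the threshold to land inside the gap, leaving at most $r$ surviving eigenvalues.

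First I would quantify the gradient perturbation. I would use the two mixed-Lipschitz inequalities $\|\nabla_{\X}f(\X,\y)-\nabla_{\X}f(\X^*,\y^*)\|_F\le \beta_X\|\X-\X^*\|_F+\beta_{Xy}\|\y-\y^*\|_2$ (via the triangle inequality through the intermediate point $(\X^*,\y)$) to get $\|\nabla_{\X}f(\X,\y)-\G^*\|_F\le \sqrt{2}\max\{\beta_X,\beta_{Xy}\}\cdot\|(\X,\y)-(\X^*,\y^*)\|$, which explains the factor $\sqrt2\max\{\beta_X,\beta_{Xy}\}$ appearing in the radius. This handles the first projection directly. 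For the second projection I must control $\|\nabla_{\X}f(\Z_+,\w_+)-\G^*\|_F$, and here the point $(\Z_+,\w_+)$ is one extragradient half-step away; I would bound $\|(\Z_+,\w_+)-(\X^*,\y^*)\|$ in terms of $\|(\X,\y)-(\X^*,\y^*)\|$ using exactly the contraction-type estimate from \cref{lemma:convergenceOfXYZWsequences}, which produces the factor $\bigl(1+\tfrac{1}{\sqrt{1-\eta^2\beta^2}}\bigr)$ that appears in the denominator of the radius bound.

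Next I would convert a Frobenius bound $\|\M-\G^*\|_F\le\epsilon$ into the rank conclusion. The subtle point, and the reason two quantities appear inside the $\max\{\cdot\}$, is that one can establish the needed threshold placement through two different sufficient conditions. Writing $\widehat\M=\X-\eta\M$ and comparing against $\X^*-\eta\G^*=\X^*$ (recall a projected-gradient step from the optimum returns $\X^*$, so $\X^*$ is the projection of $\X^*-\eta\G^*$ onto $\Sn$), I would argue that if the eigenvalues of $-\M$ past position $r$ are separated from the top block by more than $\eta^{-1}$ times a quantity controlled by $\epsilon$, the Euclidean projection formula \eqref{eq:euclidProj} retains at most $r$ components. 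One route bounds the $(r+1)$-through-$n$ eigenvalue mass using $\sqrt{\tilde r}\,\delta(r-\tilde r+1)/2$, and the other route bounds it using $\delta(r)/(1+1/\sqrt{\tilde r})$; taking the maximum yields the least restrictive (largest) admissible radius. I would make the eigenvalue-perturbation step precise via Weyl and via a Davis--Kahan-type argument only insofar as needed to locate the truncation threshold $\lambda$ strictly inside the gap of $\G^*$.

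I expect the main obstacle to be the second route, establishing the rank bound as a function of both the gap $\delta(r)$ and the gap-onset position $\tilde r$ simultaneously, rather than merely via Weyl's crude $\delta/2$ argument. The refinement encoded in the $\max$ is what makes the tradeoff ``increasing $r$ enlarges the initialization radius'' quantitatively sharp, and it requires tracking not just the size of the gap but also the number $\tilde r$ of bottom eigenvalues and how the trace-one normalization of the projection interacts with the perturbation. Carefully bounding the induced shift in the projection threshold $\lambda$ — and verifying that the contributions of the at most $n-r$ eigenvalues below the gap genuinely vanish under the truncation — is where the precise constants $(1+\sqrt2)$ and $\sqrt{\tilde r}$ must be earned, and this is the delicate part of the argument.
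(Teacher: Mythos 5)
Your proposal follows the same route as the paper's proof: the smoothness-based perturbation bounds with the exact constant $\sqrt{2}\max\{\beta_X,\beta_{Xy}\}$, the use of \cref{lemma:convergenceOfXYZWsequences} to control the half-step point $(\Z_+,\w_+)$ (which is precisely where the factor $1+1/\sqrt{1-\eta^2\beta^2}$ enters), and the two alternative eigenvalue routes combined by a maximum are all exactly the paper's ingredients. However, the core of the lemma --- converting Frobenius closeness into the rank bound --- is only gestured at, and as sketched in your first paragraph it would not go through: a gap in the spectrum of the gradient $\nabla_{\X}f(\X,\y)$ does not by itself control the rank of $\Pi_{\Sn}[\X-\eta\nabla_{\X}f(\X,\y)]$, because the spectrum of the shifted matrix depends on how the eigenvectors of $\X$ sit relative to those of the gradient, and away from the optimum there is no such alignment. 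The paper instead works entirely with $\P^*:=\X^*-\eta\G^*$, where $\G^*:=\nabla_{\X}f(\X^*,\y^*)$, whose spectrum is computable in closed form precisely because of the eigenvector alignment in \cref{lemma:Xopt-subgradEigen} (that lemma's content is the alignment, not merely the existence of a gap): $\lambda_i(\P^*)=\lambda_i(\X^*)-\eta\lambda_n(\G^*)$ for $i\le\rank(\X^*)$ and $\lambda_i(\P^*)=-\eta\lambda_{n-i+1}(\G^*)$ for $i>\rank(\X^*)$. Note also that your identity $\X^*-\eta\G^*=\X^*$ is false as written (it would force $\G^*=\mathbf{0}$); what holds is $\Pi_{\Sn}[\X^*-\eta\G^*]=\X^*$, and the spectral comparison must be made against $\P^*$, not against $\X^*$.

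The quantitative step you defer is then elementary and needs no Davis--Kahan-type eigenvector analysis. From \eqref{eq:euclidProj}, a sufficient condition for $\rank\left(\Pi_{\Sn}[\P]\right)\le r$ is $\sum_{i=1}^{r}\lambda_i(\P)-r\lambda_{r+1}(\P)\ge 1$. The paper verifies it by (i) Ky Fan's inequality, giving $\sum_{i=1}^{\tilde r}\lambda_i(\P)\ge\sum_{i=1}^{\tilde r}\lambda_i(\P^*)-\sqrt{\tilde r}\,\Vert\P-\P^*\Vert_F$ together with the exact value $\sum_{i=1}^{\tilde r}\lambda_i(\P^*)=1-\eta\tilde r\lambda_n(\G^*)$, and (ii) Weyl's inequality for $\lambda_{r+1}(\P)$, used in its plain form $\lambda_{r+1}(\P)\le\lambda_{r+1}(\P^*)+\Vert\P-\P^*\Vert_F$ for the $\delta(r)/(1+1/\sqrt{\tilde r})$ route, and in its generalized form $\lambda_{r+1}(\P)\le\lambda_{r-\tilde r+2}(\P^*)+\lambda_{\tilde r}(\P-\P^*)\le\lambda_{r-\tilde r+2}(\P^*)+\Vert\P-\P^*\Vert_F/\sqrt{\tilde r}$ for the $\sqrt{\tilde r}\,\delta(r-\tilde r+1)/2$ route (valid exactly when $r\ge 2\tilde r-1$, which is why this term is nonzero only in that regime). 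Since your perturbation bounds and two-route structure already match the paper's, filling in these two inequalities is what completes the proof along the lines you propose.
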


\begin{proof}
Denote $\P^*=\X^*-\eta\nabla_{\X}f(\X^*,\y^*)$. By \cref{lemma:connectionSubgradientNonSmoothAndSaddlePoint}, $\nabla_{\X}f(\X^*,\y^*)$ is a subgradient of the corresponding nonsmooth objective $g(\X)=\max_{\y\in\mathcal{K}}f(\X,\y)$ at the point $\X^*$. Moreover, this subgradient also satisfies the first-order optimality condition. Hence, invoking \cref{lemma:Xopt-subgradEigen} with this subgradient  we have that
\begin{align} \label{eq:eigsOfPstar}
 \forall i\le \rank(\X^*) & :\ \lambda_i(\P^*) = \lambda_i(\X^*)-\eta\lambda_n(\nabla_{\X}f(\X^*,\y^*)); \nonumber \\
\forall i>\rank(\X^*) & :\ \lambda_i(\P^*) = -\eta\lambda_{n-i+1}(\nabla_{\X}f(\X^*,\y^*)). 
\end{align}

Therefore, using \eqref{eq:eigsOfPstar} and the fact that $\lambda_{n-i+1}(\nabla{}f_{\X}(\X^*,\y^*)) = \lambda_{n}(\nabla{}f_{\X}(\X^*,\y^*))$ for all $i\leq \tilde{r}$ we have,
\begin{align} \label{ineq:sumOfXeigsInProof}
\sum_{i=1}^{\tilde{r}} {\lambda_i(\P^*)} & = \sum_{i=1}^{\tilde{r}} {\lambda_i(\X^*-\eta\nabla_{\X}f(\X^*,\y^*))}
= \sum_{i=1}^{\tilde{r}} \lambda_i(\X^*) -\eta\sum_{i=1}^{\tilde{r}} \lambda_{n-i+1}(\nabla_{\X}f(\X^*,\y^*)) \nonumber
\\ & = \sum_{i=1}^{\rank(\X^*)}\lambda_i(\X^*) -\eta \sum_{i=1}^{\tilde{r}}\lambda_n(\nabla_{\X}f(\X^*,\y^*)) = 1 -\eta \tilde{r}\lambda_n(\nabla_{\X}f(\X^*,\y^*)).
\end{align}

Let $\P\in\mathbb{S}^n$.  From the structure of the Euclidean projection onto the spectrahedron (see Eq. \eqref{eq:euclidProj}), it follows that
a sufficient condition so that $\rank\left( \Pi_{\mathcal{S}_n}[\P]\right)\le r$ is that $\sum_{i=1}^{r} {\lambda_i(\P)}-r\lambda_{r+1}(\P) \ge 1$. We will bound the LHS of this inequality.

First, it holds that
\begin{align} \label{ineq:sumTildeRbound}
\sum_{i=1}^{\tilde{r}} {\lambda_i(\P)} & \underset{(a)}{\ge} \sum_{i=1}^{\tilde{r}} {\lambda_i(\P^*)} - \sum_{i=1}^{\tilde{r}} {\lambda_i(\P^*-\P)} 
\ge \sum_{i=1}^{\tilde{r}} {\lambda_i(\P^*)} - \sqrt{\tilde{r}\sum_{i=1}^{\tilde{r}} {\lambda_i^2(\P-\P^*)}} \nonumber
\\ & \ge \sum_{i=1}^{\tilde{r}} {\lambda_i(\P^*)} - \sqrt{\tilde{r}\sum_{i=1}^{n} {\lambda_i^2(\P-\P^*)}} 
\ge \sum_{i=1}^{\tilde{r}} {\lambda_i(\P^*)} - \sqrt{\tilde{r}}\Vert\P-\P^*\Vert_F \nonumber
\\ & \underset{(b)}{\ge}  1 -\eta \tilde{r}\lambda_n(\nabla_{\X}f(\X^*,\y^*))- \sqrt{\tilde{r}}\Vert\P-\P^*\Vert_F,
\end{align}
where (a) holds from Ky Fan’s inequality for eigenvalues and (b) holds from \eqref{ineq:sumOfXeigsInProof}.

Now, for any $r\ge \tilde{r}$ using Weyl's inequality and \eqref{eq:eigsOfPstar}
\begin{align} \label{ineq:Rplus1bound1}
\lambda_{r+1}(\P) & \le \lambda_{r+1}(\P^*)+\lambda_{1}(\P-\P^*) \le \lambda_{r+1}(\P^*)+\Vert\P-\P^*\Vert_F \nonumber
\\ & = -\eta\lambda_{n-r}(\nabla_{\X}f(\X^*,\y^*))+\Vert\P-\P^*\Vert_F.
\end{align}

Thus, combining \eqref{ineq:sumTildeRbound} and \eqref{ineq:Rplus1bound1} we obtain
\begin{align} \label{ineq:opt1_Pbound}
\sum_{i=1}^{r} {\lambda_i(\P)}-r\lambda_{r+1}(\P)  \ge \sum_{i=1}^{\tilde{r}} {\lambda_i(\P)}-\tilde{r}\lambda_{r+1}(\P)
\ge 1 +\eta \tilde{r}\delta(r) - (\tilde{r}+\sqrt{\tilde{r}})\Vert\P-\P^*\Vert_F.
\end{align} 

Alternatively, if $r\ge2\tilde{r}-1$ then using the general Weyl inequality and \eqref{eq:eigsOfPstar} we obtain
\begin{align} \label{ineq:Rplus1bound2}
\lambda_{r+1}(\P) & \le \lambda_{r-\tilde{r}+2}(\P^*)+\lambda_{\tilde{r}}(\P-\P^*) = \lambda_{r-\tilde{r}+2}(\P^*)+\sqrt{\lambda^2_{\tilde{r}}(\P-\P^*)} \nonumber
\\ & \le \lambda_{r-\tilde{r}+2}(\P^*)+\frac{1}{\sqrt{\tilde{r}}}\Vert\P-\P^*\Vert_F
= -\eta\lambda_{n-r+\tilde{r}-1}(\nabla_{\X}f(\X^*,\y^*))+\frac{1}{\sqrt{\tilde{r}}}\Vert\P-\P^*\Vert_F.
\end{align} 

Thus, combining \eqref{ineq:sumTildeRbound} and \eqref{ineq:Rplus1bound2} we obtain
\begin{align} \label{ineq:opt2_Pbound}
& \sum_{i=1}^{r} {\lambda_i(\P)}-r\lambda_{r+1}(\P)  \ge \sum_{i=1}^{\tilde{r}} {\lambda_i(\P)}-\tilde{r}\lambda_{r+1}(\P)
\ge 1 +\eta \tilde{r}\delta(r-\tilde{r}+1) - 2\sqrt{\tilde{r}}\Vert\P-\P^*\Vert_F.
\end{align} 

Now we are left with bounding $\Vert\P-\P^*\Vert_F$. Note that by the smoothness of $f$, for any $(\X,\y)\in\Sn\times\mathcal{K}$ it holds that
\begin{align} \label{ineq:betaSmoothnessInProof}
& \Vert\nabla_{\X}f(\X,\y)-\nabla_{\X}f(\X^*,\y^*)\Vert_F  \nonumber
\\ & \le \Vert\nabla_{\X}f(\X,\y)-\nabla_{\X}f(\X^*,\y)\Vert_F+\Vert\nabla_{\X}f(\X^*,\y)-\nabla_{\X}f(\X^*,\y^*)\Vert_F \nonumber
\\ & \le \beta_{X}\Vert\X-\X^*\Vert_F+\beta_{Xy}\Vert\y-\y^*\Vert_2.
\end{align}

Taking $\P=\X-\eta\nabla_{\X}f(\X,\y)$ we get
\begin{align*}
\Vert\P-\P^*\Vert_F & = \Vert\X-\eta\nabla_{\X}f(\X,\y)-\X^*+\eta\nabla_{\X}f(\X^*,\y^*)\Vert_F
\\ & \le \Vert\X-\X^*\Vert_F+\eta\Vert\nabla_{\X}f(\X,\y)-\nabla_{\X}f(\X^*,\y^*)\Vert_F
\\ & \le \Vert(\X,\y)-(\X^*,\y^*)\Vert+\eta\Vert\nabla_{\X}f(\X,\y)-\nabla_{\X}f(\X^*,\y^*)\Vert_F
\\ & \le \Vert(\X,\y)-(\X^*,\y^*)\Vert+\eta\beta_{X}\Vert\X-\X^*\Vert_F+\eta\beta_{Xy}\Vert\y-\y^*\Vert_2,
\end{align*}
where the last inequality holds from \eqref{ineq:betaSmoothnessInProof}.

For any $a,b\ge0$ it holds that 
\begin{align*}
a\Vert\X-\X^*\Vert_F+b\Vert\y-\y^*\Vert_2 & \le \max\lbrace a,b\rbrace\left(\Vert\X-\X^*\Vert_F+\Vert\y-\y^*\Vert_2\right)
\\ & \le \sqrt{2}\max\lbrace a,b\rbrace \Vert(\X,\y)-(\X^*,\y^*)\Vert.
\end{align*}

Thus, by taking $a=\eta\beta_{X}$ and $b=\eta\beta_{Xy}$ we obtain
\begin{align} \label{ineq:p1NormBound}
\Vert\P-\P^*\Vert_F & \le \left(1+\sqrt{2}\eta\max\lbrace\beta_{X},\beta_{Xy}\rbrace\right)\Vert(\X,\y)-(\X^*,\y^*)\Vert.
\end{align}

Therefore, plugging \eqref{ineq:p1NormBound} into \eqref{ineq:opt1_Pbound} we obtain that the condition $\sum_{i=1}^{r} {\lambda_i(\P)}-r\lambda_{r+1}(\P)\ge1$ holds if
\begin{align*} 
\Vert(\X,\y)-(\X^*,\y^*)\Vert \le \frac{\eta\delta(r)}{(1+1/\sqrt{\tilde{r}})\left(1+\sqrt{2}\eta\max\lbrace\beta_{X},\beta_{Xy}\rbrace\right)}.
\end{align*}

Alternatively, plugging \eqref{ineq:p1NormBound} into \eqref{ineq:opt2_Pbound} we obtain that if $r\ge2\tilde{r}-1$ then the condition $\sum_{i=1}^{r} {\lambda_i(\P)}-r\lambda_{r+1}(\P)\ge1$ holds if
\begin{align*}
\Vert(\X,\y)-(\X^*,\y^*)\Vert \le \frac{\eta\sqrt{\tilde{r}}\delta(r-\tilde{r}+1)}{2\left(1+\sqrt{2}\eta\max\lbrace\beta_{X},\beta_{Xy}\rbrace\right)}.
\end{align*}

Note that $\delta(r-\tilde{r}+1)>0$ only if $r\ge2\tilde{r}-1$. Therefore, we can combine the last two inequalities to conclude that for any $r\ge\tilde{r}$ if
\begin{align} \label{ineq:XseriesRadius}
\Vert(\X,\y)-(\X^*,\y^*)\Vert \le \frac{\eta}{1+\sqrt{2}\eta\max\lbrace\beta_{X},\beta_{Xy}\rbrace}\max\Bigg\{\frac{\sqrt{\tilde{r}}\delta(r-\tilde{r}+1)}{2},\frac{\delta(r)}{(1+1/\sqrt{\tilde{r}})}\Bigg\}
\end{align}
then $\rank(\Pi_{\Sn}[\X-\nabla_{\X}f(\X,\y)])\le r$.

Similarly, taking $\P=\X-\eta\nabla_{\X}f(\Z_+,\w_+)$ we get
\begin{align*} 
\Vert\P-\P^*\Vert_F & = \Vert\X-\eta\nabla_{\X}f(\Z_+,\w_+)-\X^*+\eta\nabla_{\X}f(\X^*,\y^*)\Vert_F 
\\ & \le \Vert\X-\X^*\Vert_F+\eta\Vert\nabla_{\X}f(\Z_+,\w_+)-\nabla_{\X}f(\X^*,\y^*)\Vert_F
\\ & \le \Vert\X-\X^*\Vert_F+\eta\beta_{X}\Vert\Z_+-\X^*\Vert_F+\eta\beta_{Xy}\Vert\w_+-\y^*\Vert_2,
\end{align*}
where the last inequality holds from \eqref{ineq:betaSmoothnessInProof}.

For any $a,b,c\ge0$ it holds that 
\begin{align*}
& a\Vert\X-\X^*\Vert_F+b\Vert\Z_+-\X^*\Vert_F+c\Vert\w_+-\y^*\Vert_2 
\\ & \le a\Vert\X-\X^*\Vert_F+\max\lbrace b,c\rbrace\left(\Vert\Z_+-\X^*\Vert_F+\Vert\w_+-\y^*\Vert_2\right)
\\ & \le a\Vert\X-\X^*\Vert_F+\sqrt{2}\max\lbrace b,c\rbrace \Vert(\Z_+,\w_+)-(\X^*,\y^*)\Vert
\\ & \le a\Vert(\X,\y)-(\X^*,\y^*)\Vert+\sqrt{2}\max\lbrace b,c\rbrace \Vert(\Z_+,\w_+)-(\X^*,\y^*)\Vert
\\ & \le \left(a+\sqrt{2}\max\lbrace b,c\rbrace\left(1+\frac{1}{\sqrt{1-\eta^2\beta^2}}\right)\right)\Vert(\X,\y)-(\X^*,\y^*)\Vert,
\end{align*}
where the second to last inequality holds from \cref{lemma:convergenceOfXYZWsequences}.

Thus, by taking $a=1$, $b=\eta\beta_{X}$, and $c=\eta\beta_{Xy}$ we obtain
\begin{align} \label{ineq:p2NormBound}
\Vert\P-\P^*\Vert_F & \le \left(1+\sqrt{2}\eta\max\lbrace \beta_{X},\beta_{Xy}\rbrace\left(1+\frac{1}{\sqrt{1-\eta^2\beta^2}}\right)\right)\Vert(\X,\y)-(\X^*,\y^*)\Vert.
\end{align}

Therefore, plugging \eqref{ineq:p2NormBound} into \eqref{ineq:opt1_Pbound} we obtain that the condition $\sum_{i=1}^{r} {\lambda_i(\P)}-r\lambda_{r+1}(\P)\ge1$ holds if
\begin{align*}
\Vert(\X,\y)-(\X^*,\y^*)\Vert \le \frac{\eta\delta(r)}{(1+1/\sqrt{r^*})\left(1+\sqrt{2}\eta\max\lbrace \beta_{X},\beta_{Xy}\rbrace\left(1+\frac{1}{\sqrt{1-\eta^2\beta^2}}\right)\right)}.
\end{align*}

Alternatively, plugging \eqref{ineq:p2NormBound} into \eqref{ineq:opt2_Pbound} we obtain that if $r\ge2\tilde{r}-1$ then the condition $\sum_{i=1}^{r} {\lambda_i(\P)}-r\lambda_{r+1}(\P)\ge1$ holds if
\begin{align*}
\Vert(\X,\y)-(\X^*,\y^*)\Vert \le \frac{\eta\sqrt{\tilde{r}}\delta(r-\tilde{r}+1)}{1+\sqrt{2}\eta\max\lbrace \beta_{X},\beta_{Xy}\rbrace\left(1+\frac{1}{\sqrt{1-\eta^2\beta^2}}\right)}.
\end{align*}

Note that $\delta(r-\tilde{r}+1)>0$ only if $r\ge2\tilde{r}-1$. Therefore, we can combine the last two inequalities to conclude that for any $r\ge\tilde{r}$ if
\begin{align} \label{ineq:ZseriesRadius}
& \Vert(\X,\y)-(\X^*,\y^*)\Vert \nonumber
\\ & \le \frac{\eta}{1+\sqrt{2}\eta\max\lbrace \beta_{X},\beta_{Xy}\rbrace\left(1+\frac{1}{\sqrt{1-\eta^2\beta^2}}\right)}\max\left\lbrace\frac{\sqrt{\tilde{r}}\delta(r-\tilde{r}+1)}{2},\frac{\delta(r)}{(1+1/\sqrt{\tilde{r}})}\right\rbrace
\end{align}
then $\rank(\Pi_{\Sn}[\X-\nabla_{\X}f(\Z_+,\w_+)])\le r$.

Taking the minimum between \eqref{ineq:XseriesRadius} and \eqref{ineq:ZseriesRadius} gives us the bound on the radius in the lemma.

\end{proof}

Now we can prove \cref{theroem:allPutTogether}.

\begin{proof}[Proof of \cref{theroem:allPutTogether}]
We will prove by induction that for all $t\ge 1$ it holds that $\Vert(\X_t,\y_t)-(\X^*,\y^*)\Vert \le R_0(r)$ and $\Vert(\Z_{t},\w_{t})-(\X^*,\y^*)\Vert \le \left(1+\sqrt{2}\right)R_0(r)$, thus implying through  \cref{lemma:radiusOfLowRankProjections} that all projections $\Pi_{\Sn}[\X_t-\eta\nabla_{\X}f(\X_t,\y_t)]$ and $\Pi_{\Sn}[\X_t-\eta\nabla_{\X}f(\Z_{t+1},\w_{t+1})]$ can be replaced with their rank-r truncated counterparts given in \eqref{truncatedProjection}, without any change to the result. 

The initialization $\Vert(\X_1,\y_1)-(\X^*,\y^*)\Vert\le R_0(r)$ holds trivially.
Now, by \cref{lemma:convergenceOfXYZWsequences}, using recursion, we have that for all $t\ge1$,
\begin{align*}
\Vert(\X_{t+1},\y_{t+1})-(\X^*,\y^*)\Vert \le \Vert(\X_{t},\y_t)-(\X^*,\y^*)\Vert \le \cdots \le \Vert(\X_{1},\y_1)-(\X^*,\y^*)\Vert\le R_0(r),
\end{align*}
and for $\beta= \sqrt{2}\max\left\lbrace\sqrt{\beta_{X}^2+\beta_{yX}^2},\sqrt{\beta_{y}^2+\beta_{Xy}^2}\right\rbrace$ we have that,
\begin{align*}
\Vert(\Z_{t+1},\w_{t+1})-(\X^*,\y^*)\Vert & \le \Bigg(1+\frac{1}{\sqrt{1-\eta_{t}^2\beta^2}}\Bigg)\Vert(\X_t,\y_t)-(\X^*,\y^*)\Vert
\\ & \le \Bigg(1+\frac{1}{\sqrt{1-\eta_{t}^2\beta^2}}\Bigg)\Vert(\X_1,\y_1)-(\X^*,\y^*)\Vert
\\ & \le (1+\sqrt{2})\Vert(\X_1,\y_1)-(\X^*,\y^*)\Vert\le (1+\sqrt{2})R_0(r).
\end{align*}

Therefore, under the assumptions of the theorem, \cref{alg:EG} can be run using only rank-r truncated projections, while maintaining its original convergence rate stated in \cref{lemma:EGconvergenceRate}.
\end{proof}


\begin{remark}
A downside of considering the saddle-point formulation \eqref{problem1} when attempting to solve Problem \eqref{nonSmoothProblem} that arises from Theorem \ref{theroem:allPutTogether}, is that not only do we need a ``warm-start'' initialization for the original primal matrix variable $\X$, in the saddle-point formulation we need a ``warm-start''  for the primal-dual pair $(\X,\y)$. Nevertheless, as we demonstrate extensively in Section \ref{sec:expr}, it seems that very simple initialization schemes work very well in practice.
\end{remark}

\subsection{Back to nonsmooth problems}

\begin{corollary} \label{cor:nonsmoothExtragradient}
Fix an optimal solution $\X^*\in\Sn$ to Problem \eqref{nonSmoothProblem} and assume \cref{ass:struct} holds. Let $\G^*\in\partial g(\X^*)$ which satisfies that $\langle\X-\X^*,\G^*\rangle\ge0$  for all $\X\in\Sn$. Let $\tilde{r}$ denote the multiplicity of $\lambda_{n}(\G^*)$ and for any $r\ge \tilde{r}$ define $\delta(r):=\lambda_{n-r}(\G^*)-\lambda_{n}(\G^*)$.
Define $f$ as in Problem \eqref{problem1} and let $\lbrace(\X_t,\y_t)\rbrace_{t\ge1}$ and $\lbrace(\Z_{t},\w_t)\rbrace_{t\ge2}$ be the sequences of iterates generated by \cref{alg:EG} where $\eta$ and $R_0(r)$ are as defined in \cref{theroem:allPutTogether}.
Then, for all $t\ge1$ the projections $\Pi_{\Sn}[\X_t-\eta\nabla_{\X}f(\X_t,\y_t)]$ and $\Pi_{\Sn}[\X_t-\eta\nabla_{\X}f(\Z_{t+1},\w_{t+1})]$ can be replaced with rank-r truncated projections \eqref{truncatedProjection} without changing the sequences $\lbrace(\X_t,\y_t)\rbrace_{t\ge1}$ and $\lbrace(\Z_{t},\w_t)\rbrace_{t\ge2}$, and for any $T\ge0$ it holds that
\begin{align*}
&g\left(\frac{1}{T}\sum_{t=1}^T \Z_{t+1}\right)-g(\X^*) 
\\ & \le \frac{D^2\max{\left\lbrace\sqrt{\beta_X^2+\beta_{yX}^2},\sqrt{\beta_y^2+\beta_{Xy}^2},\frac{1}{2}(\beta_{X}+\beta_{Xy}),\frac{1}{2}(\beta_y+\beta_{yX})\right\rbrace}}{T},
\end{align*}
where $D:=\sup_{(\X,\y),(\Z,\w)\in{\Sn\times\mathcal{K}}}\Vert(\X,\y)-(\Z,\w)\Vert$.
\end{corollary}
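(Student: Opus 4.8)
The plan is to reduce the corollary to the already-established \cref{theroem:allPutTogether} via the primal-dual correspondence of \cref{lemma:connectionSubgradientNonSmoothAndSaddlePoint}. First I would invoke the converse direction of that lemma --- which is exactly where \cref{ass:struct} enters: since $\G^*\in\partial g(\X^*)$ satisfies the first-order optimality condition, there exists $\y^*\in\argmax_{\y\in\mathcal{K}}f(\X^*,\y)$ such that $(\X^*,\y^*)$ is a saddle-point of Problem \eqref{problem1} with $\nabla_{\X}f(\X^*,\y^*)=\G^*$. This identity is the crux of the reduction: it guarantees that the multiplicity $\tilde{r}$ of $\lambda_n(\G^*)$ and the gaps $\delta(r)=\lambda_{n-r}(\G^*)-\lambda_n(\G^*)$ appearing in the corollary coincide verbatim with the quantities attached by \cref{theroem:allPutTogether} to the saddle-point $(\X^*,\y^*)$. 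Hence $\eta$ and $R_0(r)$ are the same, and (under the inherited warm-start $\Vert(\X_1,\y_1)-(\X^*,\y^*)\Vert\le R_0(r)$) \cref{theroem:allPutTogether} applies directly to $(\X^*,\y^*)$.

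Applying the theorem yields two conclusions simultaneously. The first is that both projections $\Pi_{\Sn}[\X_t-\eta\nabla_{\X}f(\X_t,\y_t)]$ and $\Pi_{\Sn}[\X_t-\eta\nabla_{\X}f(\Z_{t+1},\w_{t+1})]$ may be replaced by their rank-$r$ truncated counterparts without altering the iterates --- this is precisely the low-rank claim of the corollary, inherited unchanged. The second is the ergodic saddle-point gap bound
\begin{align*}
\frac{1}{T}\sum_{t=1}^T \max_{\y\in\mathcal{K}} f(\Z_{t+1},\y) - \frac{1}{T}\sum_{t=1}^T \min_{\X\in\Sn} f(\X,\w_{t+1}) \le \frac{D^2 C}{T},
\end{align*}
where $C$ abbreviates the maximum of the four Lipschitz expressions in the statement.

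The only genuinely new step is to convert this saddle gap into the objective gap $\min_{t\in[T]}g(\Z_{t+1})-g(\X^*)$. Since $g(\X)=\max_{\y\in\mathcal{K}}f(\X,\y)$, the first ergodic term equals $\frac{1}{T}\sum_{t=1}^T g(\Z_{t+1})$. For the second term I would use the saddle-point inequalities: for each $t$, substituting $\X=\X^*$ and then using that $\y^*$ maximizes $f(\X^*,\cdot)$ gives $\min_{\X\in\Sn}f(\X,\w_{t+1})\le f(\X^*,\w_{t+1})\le f(\X^*,\y^*)=g(\X^*)$, so the averaged lower term is at most $g(\X^*)$. Combining these yields $\frac{1}{T}\sum_{t=1}^T g(\Z_{t+1})-g(\X^*)\le \frac{D^2C}{T}$, and bounding the minimum by the average, $\min_{t\in[T]}g(\Z_{t+1})\le \frac{1}{T}\sum_{t=1}^T g(\Z_{t+1})$, gives the claimed estimate.

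I do not anticipate a serious obstacle; the points requiring care are purely in the bookkeeping. The decisive inequality $\min_{\X}f(\X,\w_{t+1})\le g(\X^*)$ must be derived from the saddle-point optimality of $(\X^*,\y^*)$ rather than from any property of the running iterate $\w_{t+1}$, and the transfer of the GSC parameters relies entirely on the identity $\nabla_{\X}f(\X^*,\y^*)=\G^*$ supplied by \cref{lemma:connectionSubgradientNonSmoothAndSaddlePoint}, which is why \cref{ass:struct} is invoked.
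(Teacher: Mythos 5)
Your proposal is correct and follows essentially the same route as the paper's proof: invoke the converse direction of \cref{lemma:connectionSubgradientNonSmoothAndSaddlePoint} (where \cref{ass:struct} enters) to produce the saddle-point $(\X^*,\y^*)$ with $\nabla_{\X}f(\X^*,\y^*)=\G^*$, apply \cref{theroem:allPutTogether}, and then convert the ergodic saddle-point gap into the objective gap via $\min_{t}g(\Z_{t+1})\le\frac{1}{T}\sum_t\max_{\y\in\mK}f(\Z_{t+1},\y)$ and $\frac{1}{T}\sum_t\min_{\X\in\Sn}f(\X,\w_{t+1})\le g(\X^*)$. The bookkeeping steps you flag are exactly the ones the paper carries out.
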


\begin{proof}
Since \cref{ass:struct} holds, invoking \cref{lemma:connectionSubgradientNonSmoothAndSaddlePoint} we obtain that there exists a point $\y^*\in\argmax_{\y\in\mathcal{K}}f(\X^*,\y)$ such that $(\X^*,\y^*)$ is a saddle-point of Problem \eqref{problem1}, and $\nabla_{\X}f(\X^*,\y^*)=\G^*$.
Therefore, the assumptions of \cref{theroem:allPutTogether} hold and so by \cref{theroem:allPutTogether} we get that
\begin{align} \label{ineq:boundFromTheorem1}
& \max_{\y\in\mathcal{K}} f\left(\frac{1}{T}\sum_{t=1}^T \Z_{t+1},\y\right) -  \min_{\X\in\Sn} f\left(\X,\frac{1}{T}\sum_{t=1}^T\w_{t+1}\right) \nonumber
\\ & \le \frac{D^2\max{\left\lbrace\sqrt{\beta_X^2+\beta_{yX}^2},\sqrt{\beta_y^2+\beta_{Xy}^2},\frac{1}{2}(\beta_{X}+\beta_{Xy}),\frac{1}{2}(\beta_y+\beta_{yX})\right\rbrace}}{T}.
\end{align} 
From the definition of $g$ it holds that
\begin{align} \label{ineq:replaceMaxTerm}
g\left(\frac{1}{T}\sum_{t=1}^T \Z_{t+1}\right) =\max_{\y\in\mathcal{K}} f\left(\frac{1}{T}\sum_{t=1}^T \Z_{t+1},\y\right)
\end{align}
and
\begin{align} \label{ineq:replaceMinTerm}
\min_{\X\in\Sn} f\left(\X,\frac{1}{T}\sum_{t=1}^T\w_{t+1}\right) \le f\left(\X^*,\frac{1}{T}\sum_{t=1}^T\w_{t+1}\right) \le \max_{\y\in\mathcal{K}}f\left(\X^*,\y\right)=g(\X^*).
\end{align}
Plugging \eqref{ineq:replaceMaxTerm} and \eqref{ineq:replaceMinTerm} into the RHS of \eqref{ineq:boundFromTheorem1} we obtain the required result. 



\end{proof}

\subsection{Efficiently-computable certificates for correctness of  low-rank projections}\label{sec:certificate}

Since Theorem \ref{theroem:allPutTogether} only applies in some neighborhood of an optimal solution, it is of interest to have a procedure for verifying if the rank-$r$ truncated projection of a given point indeed equals the exact Euclidean projection. In particular, from a practical point of view, it does not matter whether the conditions of Theorem \ref{theroem:allPutTogether} hold. In practice, as long as the truncated projection (see \eqref{truncatedProjection}) equals the exact projection (see \eqref{eq:euclidProj}), we are guaranteed that Algorithm \ref{alg:EG} converges correctly with rate $O(1/t)$, without needing to verify any other condition.
Luckily, the expression in \eqref{eq:euclidProj} which characterizes the structure of the Euclidean projection onto the spectrahedron, yields exactly such a verification procedure. As already noted in  \cite{garberLowRankSmooth}, for any $\X\in\mbS^n$, we have $\widehat{\Pi}_{\mathcal{S}_n}^r[\X]=\Pi_{\S_n}[\X]$ if an only if the condition
\begin{align*} 
\sum_{i=1}^r\lambda_i(\X)\ge 1+r\cdot\lambda_{r+1}(\X)
\end{align*}
holds. 
Note that verifying this condition  simply requires increasing the rank of the SVD computation by one, i.e., computing a rank-$(r+1)$ SVD of the matrix to project rather than a rank-$r$ SVD.

\section{Empirical Evidence}\label{sec:expr}

The goal of this section is to bring empirical evidence in support of our theoretical approach. We consider various tasks that take the form of minimizing a composite objective, i.e., the sum of a smooth convex function and a nonsmooth convex function, where the nonsmoothness comes from either an $\ell_1$-norm or $\ell_2$-norm regularizer / penalty term, over a $\tau$-scaled spectrahedron. In all cases the nonsmooth objective can be written as a saddle-point with function $f(\X,\y)$ which is linear in $\y$ and in particular satisfies Assumption \ref{ass:struct}. 

The tasks considered include 1. sparse PCA, 2. robust PCA, 3. low-rank and sparse recovery, 4. phase synchronization, and 5. linearly-constrained low-rank estimation, under variety of parameters.

For all tasks considered we generate random instances, and examine the sequences of iterates generated by Algorithm \ref{alg:EG} $\lbrace(\X_t,\y_t)\rbrace_{t\ge1}$, $\lbrace(\Z_{t},\w_t)\rbrace_{t\ge2}$, when initialized with simple initialization procedures. Out of both sequences generated, we choose our candidate for the optimal solution to be the iterate for which the dual-gap, which is a certificate for optimality, is smallest. See \cref{appendixDualGapCalculation}.

In all tasks considered the goal is to recover a ground-truth low-rank matrix $\M_0\in\mathbb{S}^n$ from some noisy observation of it $\M=\M_0+\N$, where $\N\in\mathbb{S}^{n}$ is a noise matrix. We measure the signal-to-noise ratio (SNR) as $\Vert\M_0\Vert_F^2\big/\Vert\N\Vert_F^2$. 
In all experiments we measure the relative initialization error by $\left\Vert\frac{\trace(\M_0)}{\tau}\X_1-\M_0\right\Vert_F^2\Big{/}\left\Vert\M_0\right\Vert_F^2$, and similarly we measure the relative recovery error by $\left\Vert\frac{\trace(\M_0)}{\tau}\X^*-\M_0\right\Vert_F^2\Big{/}\left\Vert\M_0\right\Vert_F^2$. Note that in some of the experiments we  take $\tau<\trace(\M_0)$ to prevent the method from overfitting the noise. In addition, we measure the (standard) strict complementarity parameter which corresponds to the eigen-gap $\textrm{gap}(\nabla_{\X}f(\X^*,\y^*)):=\lambda_{n-r}(\nabla_{\X}f(\X^*,\y^*))-\lambda_{n}(\nabla_{\X}f(\X^*,\y^*))$, $r = \rank(\M_0)$.

In all experiments we use SVDs of rank $r=\rank(\M_0)$ to compute the projections in Algorithm \ref{alg:EG} according to the truncated projection given in \eqref{truncatedProjection}. To certify the correctness of these low-rank projections (that is, that they equal the exact Euclidean projection) we confirm that the inequality
\begin{align*} 
\sum_{i=1}^r\lambda_i(\P_j)\ge \tau+r\cdot\lambda_{r+1}(\P_j)
\end{align*}
always holds for $\P_1=\X_t-\eta\nabla_{\X}f(\X_t,\Y_t)$ and $\P_2=\X_t-\eta\nabla_{\X}f(\Z_{t+1},\W_{t+1})$ (see also Section \ref{sec:certificate}). Indeed, we can now already state our main observation from the experiments:

\framebox{\parbox{\dimexpr\linewidth-2\fboxsep-2\fboxrule}{\itshape%
In all tasks considered and for all random instances generated, throughout all iterations of Algorithm \ref{alg:EG}, when initialized with a simple ``warm-start'' strategy and when computing only rank-$r$ truncated projections, $r=\rank(\M_0)$, the truncated projections of $\P_1=\X_t-\eta\nabla_{\X}f(\X_t,\Y_t)$ and $\P_2=\X_t-\eta\nabla_{\X}f(\Z_{t+1},\W_{t+1})$ equal their exact full-rank counterparts. That is, Algorithm \ref{alg:EG}, using only rank-$r$ SVDs, computed exactly the same sequences of iterates it would have computed if using full-rank SVDs.
}}

Aside from the above observation, in the sequel we demonstrate that all models considered indeed satisfy that: 1. the returned solution, denoted $(\X^*,\y^*)$, is of the same rank as the ground-truth matrix and satisfies the strict complementarity condition with non-negligible parameter  (measured by the eigengap $\lambda_{n-r}(\nabla_{\X}f(\X^*,\y^*))-\lambda_{n}(\nabla_{\X}f(\X^*,\y^*))$), 2. the recovery error of the returned solution indeed improves significantly over the error of the initialization point.

\subsection{Sparse PCA}

We consider the sparse PCA problem in a well known convex formulation taken from \cite{d2007direct} and its equivalent saddle-point formulation:
\begin{align*}
\min_{\substack{\trace(\X) =1,\\ \X\succeq 0}} \langle{\X,-\M}\rangle + \lambda\Vert{\X}\Vert_1 =
\min_{\substack{\trace(\X) =1,\\ \X\succeq 0}} \max_{\Vert\Y\Vert_{\infty}\le1}\lbrace\langle\X,-\M\rangle+\lambda\langle\X,\Y\rangle\rbrace,
\end{align*}
where $\M = \z\z^{\top} + \frac{c}{2}(\N+\N^{\top})$ is a noisy observation of a rank-one matrix $\z\z^{\top}$, with $\z$ being a sparse unit vector. Each entry $\z_i$ is chosen to be $0$ with probability $0.9$ and $U\lbrace1,\ldots,10\rbrace$ with probability $0.1$, and then we normalize $\z$ to be of unit norm.

We test the results obtained when adding different magnitudes of Gaussian or uniform noise. 
We set the signal-to-noise ratio (SNR) to be a constant. Thus, we set the noise level to $c=\frac{2}{\textrm{SNR}\cdot\Vert\N+\N^{\top}\Vert_F}$ for our choice of $\textrm{SNR}$.

We initialize the $\X$ variable with the rank-one approximation of $\M$. That is, we take $\X_1=\u_1\u_1^{\top}$, where $\u_1$ is the top eigenvector of $\M$. For the $\Y$ variable we initialize it with $\Y_1=\sign(\X_1)$ which is a subgradient of $\Vert\X_1\Vert_1$.

We set the step-size to $\eta=1/(2\lambda)$ and we set the number of iterations to $T=1000$ and for any set of parameters we average the measurements over $10$ i.i.d. runs.

\begin{table*}[!htb]\renewcommand{\arraystretch}{1.3}
{\footnotesize
\begin{center}
  \begin{tabular}{| l | c | c | c | c |} \hline 
  dimension (n) & $100$ & $200$ & $400$ & $600$ \\ \hline
  \multicolumn{5}{|c|}{$\downarrow$ $\N\sim U[0,1]$, $\textrm{SNR}=1$ $\downarrow$}\\ \hline
  $\lambda$ & $0.008$ & $0.004$ & $0.002$ & $0.0013$ \\ \hline
  initialization error & $0.5997$ & $0.6009$ & $0.5990$ & $0.6002$ \\ \hline
    recovery error & $0.0054$ & $0.0040$ & $0.0035$ & $0.0043$ \\ \hline
    dual gap & $4.1\times{10}^{-5}$ & $7.9\times{10}^{-5}$ & $4.9\times{10}^{-5}$ & $3.4\times{10}^{-6}$ \\ \hline
  $\lambda_{n-1}(\nabla_{\X}f(\X^*,\y^*))-\lambda_{n}(\nabla_{\X}f(\X^*,\y^*))$ & $0.8840$ & $0.8898$ & $0.8938$ & $0.8777$ \\ \hline 
  
  \multicolumn{5}{|c|}{$\downarrow$ $\N\sim U[0,1]$, $\textrm{SNR}=0.05$ $\downarrow$}\\ \hline
  $\lambda$ & $0.04$ & $0.02$ & $0.01$ & $0.0067$ \\ \hline
  initialization error & $1.7456$ & $1.7494$ & $1.7566$ & $1.7625$ \\ \hline
  recovery error& $0.0425$ & $0.0244$ & $0.0149$ & $0.0100$ \\ \hline
    dual gap & $2.0\times{10}^{-9}$ & $5.8\times{10}^{-6}$ & $4.5\times{10}^{-4}$ & $0.0018$ \\ \hline
  $\lambda_{n-1}(\nabla_{\X}f(\X^*,\y^*))-\lambda_{n}(\nabla_{\X}f(\X^*,\y^*))$ & $0.7092$ & $0.7854$ & $0.8340$ & $0.8622$ \\ \hline 
  
  \multicolumn{5}{|c|}{$\downarrow$ $\N\sim\mathcal{N}(0.5,\I_n)$, $\textrm{SNR}=1$ $\downarrow$}\\ \hline
  $\lambda$ & $0.006$ & $0.003$ & $0.0015$ & $0.001$ \\ \hline
  initialization error & $0.1584$ & $0.1464$ & $0.1443$ & $0.1411$ \\ \hline
  recovery error & $0.0059$ & $0.0033$ & $0.0019$ & $0.0015$ \\ \hline
    dual gap & $8.6\times{10}^{-4}$ & $0.0031$ & $0.0053$ & $0.0060$ \\ \hline
  $\lambda_{n-1}(\nabla_{\X}f(\X^*,\y^*))-\lambda_{n}(\nabla_{\X}f(\X^*,\y^*))$ & $0.8406$ & $0.8869$ & $0.9178$ & $0.9331$ \\ \hline 
  
  \multicolumn{5}{|c|}{$\downarrow$ $\N\sim\mathcal{N}(0.5,\I_n)$, $\textrm{SNR}=0.05$ $\downarrow$}\\ \hline
  $\lambda$ & $0.04$ & $0.02$ & $0.01$ & $0.005$ \\ \hline
  initialization error & $1.6701$ & $1.6620$ & $1.6542$ & $1.6610$ \\ \hline
  recovery error & $0.0502$ & $0.0234$ & $0.0137$ & $0.0109$ \\ \hline
    dual gap & $1.9\times{10}^{-5}$ & $0.0041$ & $0.0534$ & $0.0409$ \\ \hline
  $\lambda_{n-1}(\nabla_{\X}f(\X^*,\y^*))-\lambda_{n}(\nabla_{\X}f(\X^*,\y^*))$ & $0.2200$ & $0.4076$ & $0.5460$ & $0.6788$ \\ \hline
   \end{tabular}
  \caption{Numerical results for the sparse PCA problem.
  }\label{table:sparsePCAuniformBigNoise}
\end{center}
}
\vskip -0.2in
\end{table*}\renewcommand{\arraystretch}{1}

\subsection{Low-rank and sparse matrix recovery}

We consider the problem of recovering a simultaneously low-rank and sparse covariance matrix \cite{lowRankAndSparse}, which can be written as the following saddle-point optimization problem:
\begin{align*}
\min_{\substack{\trace(\X) =1,\\ \X\succeq 0}}\frac{1}{2}\Vert{\X-\M}\Vert_F^2 + \lambda\Vert{\X}\Vert_1 =
\min_{\substack{\trace(\X) =\tau,\\ \X\succeq 0}} \max_{\Vert\Y\Vert_{\infty}\le1}\frac{1}{2}\Vert\X-\M\Vert_F^2+\lambda\langle\X,\Y\rangle,
\end{align*} 
where $\M = {\Z_0}{\Z_0}^{\top} + \frac{c}{2}(\N+\N^{\top})$ is a noisy observation of some low-rank and sparse covariance matrix ${\Z_0}{\Z_0}^{\top}$. We choose $\Z_0\in\mathbb{R}^{n\times r}$ to be a sparse matrix where each entry ${\Z_0}_{i,j}$ is chosen to be $0$ with probability $0.9$ and $U\lbrace1,\ldots,10\rbrace$ with probability $0.1$, and then we normalize $\Z_0$ to be of unit Frobenius norm. We choose $\N\sim\mathcal{N}(0.5,\I_n)$.

We test the model with $\rank(\Z_0\Z_0^{\top})=1,5,10$. 
We set the signal-to-noise ratio (SNR) to be a constant and set the noise level to $c=\frac{2\Vert\Z_0\Z_0^{\top}\Vert_F}{\textrm{SNR}\cdot\Vert\N+\N^{\top}\Vert_F}$ for our choice of $\textrm{SNR}$. 

We initialize the $\X$ variable with the rank-r approximation of $\M$. That is, we take $\X_1=\U_r\diag\left(\Pi_{\Delta_{\tau,r}}[\diag(-\Lambda_r)]\right)\U_r^{\top}$, where $\U_r\Lambda_r\U_r^{\top}$ is the rank-r eigen-decomposition of $\M$ and $\Delta_{\tau,r}=\lbrace\z\in\reals^r~|~\z\ge0,\ \sum_{i=1}^r \z_i=\tau\rbrace$ is the simplex of radius $\tau$ in $\reals^r$. For the $\Y$ variable we initialize it with $\Y_1=\sign(\X_1)$ which is a subgradient of $\Vert\X_1\Vert_1$.  

We set the step-size to $\eta=1$, $\tau=0.7\cdot\trace(\Z_0\Z_0^{\top})$, and the number of iterations in each experiment to $T=2000$. For each value of $r$ and $n$ we average the measurements over over $10$ i.i.d. runs.

\begin{table*}[!htb]\renewcommand{\arraystretch}{1.3}
{\footnotesize
\begin{center}
  \begin{tabular}{| l | c | c | c | c | c | c | c | c | c |} \hline 
  dimension (n) & $100$ & $200$ & $400$ & $600$ \\ \hline
  
  \multicolumn{5}{|c|}{$\downarrow$ $r=\rank(\Z_0\Z_0^{\top})=1$, $\textrm{SNR}=0.48$ $\downarrow$}\\ \hline
  $\lambda$ & $0.0012$ & $0.0035$ & $0.0016$ & $0.001$ \\ \hline
  initialization error & $0.4562$ & $0.4471$ & $0.4507$ & $0.4450$ \\ \hline
    recovery error & $0.0364$ & $0.0193$ & $0.0160$ & $0.0168$ \\ \hline
    dual gap & $0.0083$ & $0.0086$ & $0.0020$ & $4.2\times{10}^{-4}$ \\ \hline
  $\lambda_{n-r}(\nabla_{\X}f(\X^*,\y^*))-\lambda_{n}(\nabla_{\X}f(\X^*,\y^*))$ & $0.0628$ & $0.1439$ & $0.1258$ & $0.1069$ \\ \hline

\multicolumn{5}{|c|}{$\downarrow$ $r=\rank(\Z_0\Z_0^{\top})=5$, $\textrm{SNR}=2.4$ $\downarrow$}\\ \hline
  $\lambda$ & $0.0012$ & $0.0006$ & $0.0003$ & $0.0002$ \\ \hline
  initialization error & $0.2132$ & $0.2103$ & $0.1983$ & $0.1907$ \\ \hline
    recovery error & $0.0641$ & $0.0478$ & $0.0349$ & $0.0274$ \\ \hline
    dual gap & $9.0\times{10}^{-4}$ & $4.3\times{10}^{-4}$ & $1.4\times{10}^{-4}$ & $7.3\times{10}^{-5}$ \\ \hline
  $\lambda_{n-r}(\nabla_{\X}f(\X^*,\y^*))-\lambda_{n}(\nabla_{\X}f(\X^*,\y^*))$ & $0.0148$ & $0.0200$ & $0.0257$ & $0.0277$ \\ \hline 

\multicolumn{5}{|c|}{$\downarrow$ $r=\rank(\Z_0\Z_0^{\top})=10$, $\textrm{SNR}=4.8$ $\downarrow$}\\ \hline
  $\lambda$ & $0.0007$ & $0.0004$ & $0.0002$ & $0.0001$ \\ \hline
  initialization error & $0.1855$ & $0.1661$ & $0.1527$ & $0.1473$ \\ \hline
    recovery error & $0.0702$ & $0.0403$ & $0.0268$ & $0.0356$ \\ \hline
    dual gap & $4.9\times{10}^{-4}$ & $6.6\times{10}^{-4}$ & $4.2\times{10}^{-4}$ & $3.4\times{10}^{-5}$ \\ \hline
  $\lambda_{n-r}(\nabla_{\X}f(\X^*,\y^*))-\lambda_{n}(\nabla_{\X}f(\X^*,\y^*))$ & $0.0072$ & $0.0142$ & $0.0187$ & $0.0160$ \\ \hline
   \end{tabular}
  \caption{Numerical results for the low-rank and sparse matrix recovery problem.
  }\label{table:lowRank&SparseRank10}
\end{center}
}
\vskip -0.2in
\end{table*}\renewcommand{\arraystretch}{1}

\subsection{Robust PCA}

We consider the robust PCA problem \cite{robustPCA1} in the following formulation:
\begin{align*}
\min_{\substack{\trace(\X) =\tau,\\ \X\succeq 0}}\Vert{\X-\M}\Vert_1 =
\min_{\substack{\trace(\X) =\tau,\\ \X\succeq 0}} \max_{\Vert\Y\Vert_{\infty}\le1}\langle\X-\M,\Y\rangle,
\end{align*}
where $\M = r\Z_0\Z_0^{\top} + \frac{1}{2}(\N+\N^{\top})$ is a sparsely-corrupted observation of some rank-r  matrix $\Z_0\Z_0^{\top}$. We choose $\Z_0\in\mathbb{R}^{n\times r}$ to be a random unit Frobenius norm matrix. For $\N\in\reals^{n\times n}$, we choose each entry to be $0$ with probability $1-1/\sqrt{n}$ and otherwise $1$ or $-1$ with equal probability.

We initialize the $\X$ variable with the projection $\X_1=\Pi_{\lbrace\trace(\X)=\tau,\ \X\succeq 0\rbrace}[\M]$, and the $\Y$ variable with $\Y_1=\sign(\X_1-\M)$. 


We test the model with $\rank(\Z_0\Z_0^{\top})=1,5,10$. For $\rank(\Z_0\Z_0^{\top})=1$ we set the step-size to $\eta=n/10$ and for $\rank(\Z_0\Z_0^{\top})=5,10$ we set it to $\eta=1$. We set the trace bound to $\tau=0.95\cdot\trace(r\Z_0\Z_0^{\top})$. For every set of parameters we average the measurements over $10$ i.i.d. runs. 


\begin{table*}[!htb]\renewcommand{\arraystretch}{1.3}
{\footnotesize
\begin{center}
  \begin{tabular}{| l | c | c | c | c | c | c | c | c | c |} \hline 
  dimension (n) & $100$ & $200$ & $400$ & $600$ \\ \hline
  
 \multicolumn{5}{|c|}{$\downarrow$ $r=\rank(\Z_0\Z_0^{\top})=1$, $T=3000$ $\downarrow$}\\ \hline 
SNR & $0.0021$ & $7.2\times{10}^{-4}$ & $2.5\times{10}^{-4}$ & $1.3\times{10}^{-4}$ \\ \hline
  initialization error & $1.3511$ & $1.3430$ & $1.2889$ & $1.2606$ \\ \hline
    recovery error & $0.0084$ & $0.0107$ & $0.0109$ & $0.0107$ \\ \hline
    dual gap & $0.0016$ & $0.0029$ & $0.0044$ & $0.0069$ \\ \hline
  $\lambda_{n-r}(\nabla_{\X}f(\X^*,\y^*))-\lambda_{n}(\nabla_{\X}f(\X^*,\y^*))$ & $15.5944$ & $41.2139$ & $85.8117$ & $140.5349$ \\ \hline 

\multicolumn{5}{|c|}{$\downarrow$ $r=\rank(\Z_0\Z_0^{\top})=5$, $T=20,000$ $\downarrow$}\\ \hline
SNR & $0.0110$ & $0.0038$ & $0.0013$ & $6.9\times{10}^{-4}$ \\ \hline
  initialization error & $1.5501$ & $1.5527$ & $1.5221$ & $1.4833$ \\ \hline
    recovery error & $0.0092$ & $0.0092$ & $0.0087$ & $0.0075$  \\ \hline
    dual gap & $0.0084$ & $0.0390$ & $0.1866$ & $0.4721$ \\ \hline
  $\lambda_{n-r}(\nabla_{\X}f(\X^*,\y^*))-\lambda_{n}(\nabla_{\X}f(\X^*,\y^*))$ & $7.6734$ & $26.2132$ & $66.1113$ & $108.7215$  \\ \hline 

\multicolumn{5}{|c|}{$\downarrow$ $r=\rank(\Z_0\Z_0^{\top})=10$, $T=30,000$ $\downarrow$}\\ \hline
  SNR & $0.0229$ & $0.0077$ & $0.0026$ & $0.0014$ \\ \hline
  initialization error & $1.5729$ & $1.6485$ & $1.6317$ & $1.5949$ \\ \hline
  recovery error & $0.0079$ & $0.0081$ & $0.0073$ & $0.0065$  \\ \hline
  dual gap & $0.0139$ & $0.0338$ & $0.1533$ & $0.3561$ \\ \hline
  $\lambda_{n-r}(\nabla_{\X}f(\X^*,\y^*))-\lambda_{n}(\nabla_{\X}f(\X^*,\y^*))$ & $1.7945$ & $16.9890$ & $48.9799$ & $82.2727$  \\ \hline 
   \end{tabular}
  \caption{Numerical results for the robust PCA problem.
  }\label{table:robustPCArank10}
\end{center}
}
\vskip -0.2in
\end{table*}\renewcommand{\arraystretch}{1}

\subsection{Phase synchronization}

We consider the phase synchronization problem (see for instance \cite{phaseSyncronization1}) which can be written as:
\begin{align} \label{phaseSync}
\max_{\substack{\z\in\mathbb{C}^n ,\\ \vert z_j\vert=1\ \forall j\in[n]}}\z^*\M\z,
\end{align}
where $\M = \z_0\z_0^{*} + c\N$ is a noisy observation of some rank-one matrix $\z_0\z_0^{*}$ such that $\z_0\in\mathbb{C}^n$ and ${\z_0}_j=e^{i\theta_j}$ where $\theta_j\in[0,2\pi]$. We follow the statistical model in \cite{phaseSyncronization1} where the noise matrix $\N\in\mathbb{C}^{n\times{}n}$ is chosen such that every entry is
\begin{align*}
\N_{jk}=\Bigg\lbrace\begin{array}{lc}\mathcal{N}(0,1)+i\mathcal{N}(0,1) & j<k
\\ \overline{\N}_{kj} & j>k 
\\ 0 & j=k\end{array}.
\end{align*}
It is known that for a large $n$ and $c=\mathcal{O}\left(\sqrt{\frac{n}{\log{}n}}\right)$, with high probability the SDP relaxation of \eqref{phaseSync} is able to recover the original signal (see \cite{phaseSyncronization1}).

We solve a penalized version of the SDP relaxation of \eqref{phaseSync} which can be written as the following saddle-point optimization problem:
\begin{align*}
\min_{\substack{\trace(\X) =n,\\ \X\succeq 0}}\langle{\X,-\M}\rangle + \lambda\Vert{\textrm{diag}(\X)-\overrightarrow{\mathbf{1}}}\Vert_2 =
\min_{\substack{\trace(\X)=n,\\ \X\succeq 0}} \max_{\Vert\y\Vert_2\le1}\langle\X,-\M\rangle+\lambda\langle\diag(\X)-\overrightarrow{\mathbf{1}},\y\rangle,
\end{align*}
where $\overrightarrow{\mathbf{1}}$ is the all-ones vector.

While the phase synchronization problem is formulated over the complex numbers, extending our results to handle this model is straightforward.

We initialize the $\X$ variable with the rank-one approximation of $\M$. That is, we take $\X_1=n\u_1\u_1^{*}$, where $\u_1$ is the top eigenvector of $\M$. For the $\y$ variable we initialize it with $\y_1=(\diag(\X_1)-\overrightarrow{\mathbf{1}})/\Vert\diag(\X_1)-\overrightarrow{\mathbf{1}}\Vert_2$.

We set the noise level to $c=0.18\sqrt{n}$. We set the number of iterations in each experiment to $T=10,000$ and for each choice of $n$ we average the measurements over $10$ i.i.d. runs.

\begin{table*}[!htb]\renewcommand{\arraystretch}{1.3}
{\footnotesize
\begin{center}
  \begin{tabular}{| l | c | c | c | c | c | c | c | c | c |} \hline 
  dimension (n) & $100$ & $200$ & $400$ & $600$ \\ \hline
  SNR & $0.1553$ & $0.0775$ & $0.0387$ & $0.0258$ \\ \hline
$\lambda$ & $200$ & $600$ & $1600$ & $2800$ \\ \hline
$\eta$ & $1/400$ & $1/800$ & $1/1800$ & $1/1800$ \\ \hline
  initialization error & $0.1270$ & $0.1255$ & $0.1284$ & $0.1323$ \\ \hline
  recovery error & $0.0698$ & $0.0659$ & $0.0683$ & $0.0719$  \\ \hline
  dual gap & $7.8\times10^{-8}$ & $3.9\times10^{-5}$ & $0.1553$ & $0.5112$ \\ \hline
  $\lambda_{n-1}(\nabla_{\X}f(\X^*,\y^*))-\lambda_{n}(\nabla_{\X}f(\X^*,\y^*))$ & $39.8591$ & $78.9982$ & $150.3524$ & $217.06$  \\ \hline 
  $\Vert\diag(\X^*)-\overrightarrow{\mathbf{1}}\Vert_2$ & $3.2\times10^{-10}$ & $2.1\times10^{-8}$ & $5.1\times10^{-7}$ & $3.7\times10^{-7}$ \\ \hline
   \end{tabular}
  \caption{Numerical results for the phase synchronization problem.
  }\label{table:phaseSync}
\end{center}
}
\vskip -0.2in
\end{table*}\renewcommand{\arraystretch}{1}

\subsection{Linearly constrained low-rank matrix estimation}

Consider the following penalized formulation:
\begin{align*}
\min_{\substack{\trace(\X)=1,\\ \X\succeq 0}} \langle\X,-\M\rangle+\lambda\Vert\mathcal{A}(\X)-\b\Vert_2 = \min_{\substack{\trace(\X)=1,\\ \X\succeq 0}} \max_{\Vert\y\Vert_2\le1}\langle\X,-\M\rangle+\lambda\langle\mathcal{A}(\X)-\b,\y\rangle,
\end{align*}

where $\M = \z_0\z_0^{\top} + \frac{c}{2}(\N+\N^{\top})$ is the noisy observation of some rank-one matrix $\z_0\z_0^{\top}$ such that $\Vert\z_0\Vert_2=1$ and the noise matrix is chosen $\N\sim\mathcal{N}(0,\I_n)$. We take $\mathcal{A}(\X)=(\langle\A_1,\X\rangle,\ldots,\langle\A_m,\X\rangle)^{\top}$ with matrices $\A_1,\ldots,\A_m\in\mathbb{S}^n$ of the form $\A_i=\v_i\v_i^{\top}$ such that $\v_i\sim\mathcal{N}(0,1)$. We take $\b\in\reals^m$ such that $b_i=\langle\A_i,\z_0\z_0^{\top}\rangle$.

We initialize the $\X$ variable with the rank-one approximation of $\M$. That is, we take $\X_1=\u_1\u_1^{\top}$, where $\u_1$ is the top eigenvector of $\M$. The $\y$ variable is initialized with $\y_1=(\mathcal{A}(\X_1)-\b)/\Vert\mathcal{A}(\X_1)-\b\Vert_2$.

We set the number of constraints to $m=n$, the penalty parameter to $\lambda=2$, and the step-size to $\eta=1/(2\lambda)$. We set the number of iterations in each experiment to $T=2000$ and for each value of $n$ we average the measurements over  $10$ i.i.d. runs.


\begin{table*}[!htb]\renewcommand{\arraystretch}{1.3}
{\footnotesize
\begin{center}
  \begin{tabular}{| l | c | c | c | c | c | c | c | c | c |} \hline 
  dimension (n) & $100$ & $200$ & $400$ & $600$ \\ \hline
  $\textrm{SNR}$ & $0.15$ & $0.075$ & $0.04$ & $0.027$ \\ \hline
  initialization error & $0.1219$ & $0.1324$ & $0.1242$ & $0.1228$ \\ \hline
  recovery error & $0.0437$ & $0.0617$ & $0.0685$ & $0.0735$  \\ \hline
  dual gap & $5.3\times10^{-11}$ & $5.0\times10^{-12}$ & $8.5\times10^{-12}$ & $2.3\times10^{-11}$ \\ \hline
  $\lambda_{n-1}(\nabla_{\X}f(\X^*,\y^*))-\lambda_{n}(\nabla_{\X}f(\X^*,\y^*))$ & $0.2941$ & $0.3409$ & $0.4690$ & $0.5069$  \\ \hline
  $\Vert\mathcal{A}(\X^*)-\b\Vert_2$ & $0.0080$ & $0.0082$ & $0.0079$ & $0.0073$ \\ \hline
   \end{tabular}
  \caption{Numerical results for the linearly constrained low-rank matrix estimation problem. 
  }\label{table:linearlyConstrained}
\end{center}
}
\vskip -0.2in
\end{table*}\renewcommand{\arraystretch}{1}

\section{Discussion}
This work expands upon a line of research that aims to harness the ability of convex relaxations to produce low-rank and high-quality solutions to important low-rank matrix optimization problems, while insisting on methods that, at least locally, store and manipulate only low-rank matrices. Focusing on the challenging case of nonsmooth objective functions and following our evidence for the difficulties of obtaining such a result for subgradient methods  (\cref{lemma:negativeExampleNonSmooth}), we consider tackling nonsmooth objectives via saddle-point formulations. We  prove that indeed under a generalized strict complementarity condition, a state-of-the-art method for convex-concave saddle-point problems converges locally while storing and manipulating only low-rank matrices. Extensive experiments over several tasks demonstrate that our conceptual approach of utilizing low-rank projections for more efficient optimization is not only of theoretical merit, but indeed seems to work well in practice.

\section*{Acknowledgements}
This research was supported by the ISRAEL SCIENCE FOUNDATION (grant No. 1108/18).

\bibliographystyle{plain}
\bibliography{bibli}

\appendix
\appendixpage


\section{Proof of Lemma \ref{lemma:kkt}}\label{sec:proofKKT}
We first restate the lemma and then prove it.

\begin{lemma} 
Let $\X^*\in\Sn$ be a rank-$r^*$ optimal solution to Problem \eqref{nonSmoothProblem}. $\X^*$ satisfies the (standard) strict complementarity assumption with parameter $\delta>0$ if and only if there exists a subgradient $\G^*\in\partial g(\X^*)$ such that $\langle \X-\X^*,\G^*\rangle\ge0$ for all $\X\in\mathcal{S}_n$ and $\lambda_{n-r^*}(\G^*)-\lambda_{n}(\G^*)\ge\delta$.
\end{lemma}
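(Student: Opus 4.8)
The plan is to derive both directions from Lagrangian duality for Problem \eqref{nonSmoothProblem}, treating the constraint $\X\succeq0$ with a multiplier $\Z\succeq0$ and the constraint $\trace(\X)=1$ with a multiplier $s\in\reals$, so that the Lagrangian is $L(\X,\Z,s)=g(\X)+s(1-\trace(\X))-\langle\Z,\X\rangle$ and $q(\Z,s)=\min_{\X\in\mbS^n}L(\X,\Z,s)$. Since $g$ is finite and convex on all of $\mbS^n$ it is subdifferentiable everywhere, and since $\frac1n\I\succ0$ is strictly feasible, Slater's condition holds; hence strong duality holds and the primal and dual optimal values both equal $g(\X^*)$. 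The bridge between the two formulations in the lemma will be the identity $\G^*=s^*\I+\Z^*$, relating a qualifying subgradient $\G^*\in\partial g(\X^*)$ to a dual optimal pair $(\Z^*,s^*)$.

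For the forward direction I would assume a dual optimal $(\Z^*,s^*)$ with $\Z^*\succeq0$, $\rank(\Z^*)=n-r^*$, and $\lambda_{n-r^*}(\Z^*)\ge\delta$. Strong duality gives $q(\Z^*,s^*)=g(\X^*)$; substituting $\X=\X^*$ into $q$ and using $\trace(\X^*)=1$ forces the minimum defining $q$ to be attained at $\X^*$ and yields complementary slackness $\langle\Z^*,\X^*\rangle=0$. Stationarity of the convex map $\X\mapsto L(\X,\Z^*,s^*)$ at $\X^*$ then gives $0\in\partial g(\X^*)-s^*\I-\Z^*$, so $\G^*:=s^*\I+\Z^*\in\partial g(\X^*)$. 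A short computation gives $\langle\X-\X^*,\G^*\rangle=s^*(\trace(\X)-1)+\langle\X,\Z^*\rangle-\langle\X^*,\Z^*\rangle=\langle\X,\Z^*\rangle\ge0$ for all $\X\in\Sn$, using complementary slackness and $\X,\Z^*\succeq0$. Finally, since $\G^*$ has eigenvalues $s^*+\lambda_i(\Z^*)$, the rank and eigenvalue data of $\Z^*$ give $\lambda_n(\G^*)=s^*$ and $\lambda_{n-r^*}(\G^*)=s^*+\lambda_{n-r^*}(\Z^*)\ge s^*+\delta$, i.e.\ $\lambda_{n-r^*}(\G^*)-\lambda_n(\G^*)\ge\delta$.

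For the converse I would start from a qualifying $\G^*\in\partial g(\X^*)$ with $\lambda_{n-r^*}(\G^*)-\lambda_n(\G^*)\ge\delta$ and set $s^*:=\lambda_n(\G^*)$, $\Z^*:=\G^*-s^*\I$. Then $\Z^*\succeq0$ because its smallest eigenvalue is $\lambda_n(\G^*)-s^*=0$, and $\lambda_{n-r^*}(\Z^*)=\lambda_{n-r^*}(\G^*)-\lambda_n(\G^*)\ge\delta>0$. The crux is to show $\rank(\Z^*)=n-r^*$, equivalently that $\lambda_n(\G^*)$ has multiplicity exactly $r^*$: Lemma \ref{lemma:Xopt-subgradEigen} supplies $r^*$ orthonormal eigenvectors of $\X^*$ that are eigenvectors of $\G^*$ for $\lambda_n(\G^*)$, giving multiplicity at least $r^*$, while the gap $\lambda_{n-r^*}(\G^*)>\lambda_n(\G^*)$ caps it at exactly $r^*$, so the $r^*$ smallest eigenvalues of $\Z^*$ vanish. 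The same eigenvectors give $\langle\G^*,\X^*\rangle=\lambda_n(\G^*)\trace(\X^*)=\lambda_n(\G^*)$, hence $\langle\Z^*,\X^*\rangle=0$. To certify dual optimality, stationarity $\G^*-s^*\I-\Z^*=0$ shows $\X^*$ minimizes $L(\cdot,\Z^*,s^*)$, so $q(\Z^*,s^*)=g(\X^*)+s^*(1-\trace(\X^*))-\langle\Z^*,\X^*\rangle=g(\X^*)$; by weak duality this dual-feasible pair attains the dual optimum and has the required rank and eigenvalue structure.

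The main obstacle is the multiplicity argument in the converse — establishing $\rank(\Z^*)=n-r^*$ rather than merely $\rank(\Z^*)\le n-r^*$. This is precisely where Lemma \ref{lemma:Xopt-subgradEigen} is indispensable, since it pins down the $r^*$-dimensional bottom eigenspace of $\G^*$ and simultaneously delivers the complementary slackness identity; everything else is standard KKT bookkeeping enabled by Slater's condition. This mirrors the smooth argument of \cite{spectralFrankWolfe}, with the gradient replaced throughout by a subgradient satisfying the first-order optimality condition.
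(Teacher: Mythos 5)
Your proof is correct and follows essentially the same route as the paper: Lagrangian duality with multipliers $(\Z,s)$, Slater's condition, and the bridge identity $\G^*=s^*\I+\Z^*$ connecting qualifying subgradients to dual optimal pairs. If anything, your converse direction is more explicit than the paper's own proof, which sets up the correspondence only starting from a KKT pair and leaves implicit both the dual optimality of the constructed pair $\bigl(\G^*-\lambda_n(\G^*)\I,\ \lambda_n(\G^*)\bigr)$ (which you certify via weak duality) and the rank-exactness $\rank(\Z^*)=n-r^*$, which you pin down using Lemma~\ref{lemma:Xopt-subgradEigen} together with the eigen-gap.
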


\begin{proof}
By Slater's condition strong duality holds for Problem \eqref{nonSmoothProblem}. Therefore, the KKT conditions for Problem \eqref{nonSmoothProblem} hold for the optimal solution $\X^*$ and some optimal dual solution $(\Z^*,s^*)$. 
The Lagrangian of Problem \eqref{nonSmoothProblem} can be written as
\begin{align*}
\mathcal{L}(\X,\Z,s)=g(\X)+s(1-\trace(\X))-\langle\Z,\X\rangle.
\end{align*}
Thus, using the generalized KKT conditions for nonsmooth optimization problems (see Theorem 6.1.1 in \cite{generalizedKKT}), this implies that for the primal and dual optimal solutions
\begin{align*}
\mathbf{0}\in \partial g(\X^*)-\Z^*-s^*\I,
\\ \langle \X^*,\Z^*\rangle=0,
\\ \trace(\X^*)=1,
\\ \X^*,\Z^*\succeq0.
\end{align*}
The generalized first order optimality condition for unconstrained minimization implies that there exists some $\G^*\in \partial g(\X^*)$ for which $\mathbf{0}= \G^*-\Z^*-s^*\I$. It remains to be shown that $\langle \X-\X^*,\G^*\rangle\ge0$ for all $\X\in\mathcal{S}_n$.

The cone of positive semidefinite matrices is self-dual, that is $\Z^*\succeq0$ if and only if $\langle \X,\Z^*\rangle\ge0$ for all $\X\in\mathcal{S}_n$.
Therefore, $\Z^*\succeq0$ if and only if for all $\X\in\mathcal{S}_n$ it holds that
\begin{align*}
0\le \langle \X,\Z^*\rangle & = \langle \X,\Z^*\rangle-\langle \X^*,\Z^*\rangle+s^*\langle \X-\X^*,\I\rangle
= \langle \X-\X^*,\Z^*+s^*\I\rangle
\\ & = \langle \X-\X^*,\G^*\rangle
\end{align*}
as desired. The first equality holds using the complementarity condition and the property that $\trace(\X)=\trace(\X^*)=1$.

Using the equality $\G^*=\Z^*+s^*\I$ it holds that 
\begin{align*}
\lambda_{n-r^*}(\Z^*) & = \lambda_{n-r^*}(\Z^*)+s^*\I-\lambda_{n}(\Z^*)-s^*\I = \lambda_{n-r^*}(\Z^*+s^*\I)-\lambda_{n}(\Z^*+s^*\I)
\\ & =\lambda_{n-r^*}(\G^*)-\lambda_{n}(\G^*).
\end{align*}
Thus, $\X^*$ satisfies the strict complementarity assumption with parameter $\delta>0$, i.e., $\lambda_{n-r^*}(\Z^*)\ge\delta$, if and only if $\lambda_{n-r^*}(\G^*)-\lambda_{n}(\G^*)\ge\delta$.
\end{proof}


\section{Proof of Lemma \ref{lemma:connectionSubgradientNonSmoothAndSaddlePoint}}\label{sec:proofSubgradEquiv}
We first restate the lemma and then prove it.
\begin{lemma} 
If $(\X^*,\y^*)$ is a saddle-point of Problem \eqref{problem1} then $\X^*$ is an optimal solution to Problem \eqref{nonSmoothProblem},  $\nabla_{\X}f(\X^*,\y^*)\in\partial g(\X^*)$,  and for all $\X\in\Sn$ it holds that $\langle\X-\X^*,\nabla_{\X}f(\X^*,\y^*)\rangle\ge0$.
Conversely, under \cref{ass:struct}, if $\X^*$ is an optimal solution to Problem \eqref{nonSmoothProblem}, and $\G^*\in\partial{}g(\X^*)$ which satisfies $\langle\X-\X^*,\G^*\rangle\ge0$  for all $\X\in\Sn$, then there exists $\y^*\in\argmax_{\y\in\mathcal{K}}f(\X^*,\y)$ such that $(\X^*,\y^*)$ is a saddle-point of Problem \eqref{problem1}, and $\nabla_{\X}f(\X^*,\y^*)=\G^*$.
\end{lemma}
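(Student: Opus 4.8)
The plan is to treat the two directions separately, in both cases exploiting that $f(\cdot,\y)$ is smooth and convex so that the single gradient $\nabla_{\X}f(\X^*,\y^*)$ can serve as a genuine subgradient of $g$. For the forward direction, given a saddle-point $(\X^*,\y^*)$ the defining inequalities $f(\X^*,\y)\le f(\X^*,\y^*)\le f(\X,\y^*)$ immediately yield $g(\X^*)=\max_{\y\in\mathcal{K}}f(\X^*,\y)=f(\X^*,\y^*)$, and for any $\X\in\Sn$ we get $g(\X)\ge f(\X,\y^*)\ge f(\X^*,\y^*)=g(\X^*)$, so $\X^*$ minimizes $g$ over $\Sn$ (no minimax theorem is needed, only the saddle definition). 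To show $\nabla_{\X}f(\X^*,\y^*)\in\partial g(\X^*)$ I would combine the pointwise bound $g(\X)\ge f(\X,\y^*)$, valid for all $\X\in\mathbb{S}^n$, with the gradient inequality of the convex function $f(\cdot,\y^*)$, namely $f(\X,\y^*)\ge f(\X^*,\y^*)+\langle\nabla_{\X}f(\X^*,\y^*),\X-\X^*\rangle$; since $g(\X^*)=f(\X^*,\y^*)$, this is precisely the subgradient inequality. The first-order inequality $\langle\X-\X^*,\nabla_{\X}f(\X^*,\y^*)\rangle\ge0$ for all $\X\in\Sn$ then follows because the right saddle inequality says $\X^*$ minimizes the smooth convex $f(\cdot,\y^*)$ over $\Sn$, so \cref{optCondition} applies directly.

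The converse direction is where \cref{ass:struct} does the work. Writing $g(\X)=h(\X)+\phi(\mA(\X)-\b)$ with $\phi(\z):=\max_{\y\in\mathcal{K}}\y^{\top}\z$ the support function of $\mathcal{K}$, I would invoke subdifferential calculus: since $h$ is smooth and $\phi$ is finite everywhere (because $\mathcal{K}$ is compact, hence its support function is finite and continuous), the sum rule and the affine-composition rule give $\partial g(\X^*)=\nabla h(\X^*)+\mA^{*}\,\partial\phi(\mA(\X^*)-\b)$, where $\mA^{*}$ denotes the adjoint of $\mA$. The key structural fact is that the subdifferential of a support function is its set of maximizers, $\partial\phi(\z)=\argmax_{\y\in\mathcal{K}}\y^{\top}\z$. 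Consequently the given $\G^*\in\partial g(\X^*)$ can be written as $\G^*=\nabla h(\X^*)+\mA^{*}\y^*$ for some $\y^*\in\argmax_{\y\in\mathcal{K}}\y^{\top}(\mA(\X^*)-\b)=\argmax_{\y\in\mathcal{K}}f(\X^*,\y)$. Because $\nabla_{\X}f(\X^*,\y^*)=\nabla h(\X^*)+\mA^{*}\y^*$, this simultaneously gives $\nabla_{\X}f(\X^*,\y^*)=\G^*$ and the right saddle inequality $f(\X^*,\y)\le f(\X^*,\y^*)$ for all $\y\in\mathcal{K}$. For the left inequality I would reuse convexity of $f(\cdot,\y^*)$ together with the hypothesis on $\G^*$: for all $\X\in\Sn$, $f(\X,\y^*)\ge f(\X^*,\y^*)+\langle\G^*,\X-\X^*\rangle\ge f(\X^*,\y^*)$, completing the saddle-point verification.

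I expect the main obstacle to be the converse direction, specifically justifying the identity $\partial g(\X^*)=\nabla h(\X^*)+\mA^{*}\argmax_{\y\in\mathcal{K}}\y^{\top}(\mA(\X^*)-\b)$ and thereby recovering an explicit dual variable $\y^*$ realizing an arbitrary $\G^*$. This is exactly the step that forces \cref{ass:struct}: for a general $g=\max_{\y\in\mathcal{K}}f(\X,\y)$ the subdifferential is only the convex hull of $\{\nabla_{\X}f(\X^*,\y):\y\in\argmax_{\y\in\mathcal{K}}f(\X^*,\y)\}$, and recovering a single maximizer $\y^*$ with $\nabla_{\X}f(\X^*,\y^*)=\G^*$ need not be possible unless $f$ is affine in $\y$, which Assumption \ref{ass:struct} guarantees (there the map $\y\mapsto\nabla_{\X}f(\X^*,\y)=\nabla h(\X^*)+\mA^{*}\y$ is affine, so the preimage containing $\G^*$ is nonempty). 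The remaining steps --- the gradient inequalities and the appeal to \cref{optCondition} --- are routine.
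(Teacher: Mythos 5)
Your proof is correct and takes essentially the same approach as the paper's: the forward direction is identical, and in the converse direction both arguments reduce to the same subdifferential formula $\partial g(\X^*)=\nabla h(\X^*)+\mA^{\top}\bigl(\argmax_{\y\in\mK}\y^{\top}(\mA(\X^*)-\b)\bigr)$, which the paper obtains via Danskin's theorem (dropping the convex hull by linearity of $\mA^{\top}$ and convexity of the argmax set) while you obtain it via the support-function subdifferential identity together with the affine chain rule. The extraction of $\y^*$ and the verification of the two saddle inequalities then coincide with the paper's step by step.
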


\begin{proof}

For the first direction of the lemma, we first observe that for any $\X_1,\X_2\in\Sn$ and $\widetilde{\y}_1\in \argmax_{\y\in\mathcal{K}}f(\X_1,\y)$, $\widetilde{\y}_2\in \argmax_{\y\in\mathcal{K}}f(\X_2,\y)$, using the gradient inequality for $f(\cdot,\widetilde{\y}_2)$, it holds that
\begin{align*}
g(\X_1) & = f(\X_1,\widetilde{\y}_1) \ge f(\X_1,\widetilde{\y}_2) \ge f(\X_2,\widetilde{\y}_2) + \langle\nabla_{\X}f(\X_2,\widetilde{\y}_2), \X_1-\X_2\rangle 
\\ & = g(\X_2)+\langle\nabla_{\X}f(\X_2,\widetilde{\y}_2), \X_1-\X_2\rangle.
\end{align*}
Thus,  $\nabla_{\X}f(\X_2,\widetilde{\y}_2)$ is a subgradient of $g(\cdot)$ at $\X_2$, i.e., $\nabla_{\X}f(\X_2,\widetilde{\y}_2)\in\partial g(\X_2)$.

In particular, for a saddle-point $(\X^*,\y^*)\in{\Sn\times\mathcal{K}}$ it holds that $\y^*\in \argmax_{\y\in\mathcal{K}}f(\X^*,\y)$, and therefore, it follows that $\nabla_{\X}f(\X^*,\y^*)\in\partial g(\X^*)$. In addition, for all $\X\in\Sn$ and $\widetilde{\y}\in \argmax_{\y\in\mathcal{K}}f(\X,\y)$ we have
\[ g(\X^*)=f(\X^*,\y^*)\le f(\X,\y^*)\le f(\X,\widetilde{\y})=g(\X),\] which implies that $\X^*$ is an optimal solution to $\min_{\X\in\Sn}g(\X)$. 

Finally, we need to show that the subgradient $\nabla_{\X}f(\X^*,\y^*)\in\partial g(\X^*)$ indeed satisfies the first-order optimality condition for $g(\cdot)$ at $\X^*$. To see this, we observe that since $\X^*$ is an optimal solution to $\min_{\X\in\Sn}f(\X,\y^*)$, it follows from the first-order optimality condition for the problem $\min_{\X\in\mS_n}f(\X,\y^*)$, that for all $\W\in\Sn$
\begin{align*}
\langle \W-\X^*,\nabla_{\X}f(\X^*,\y^*)\rangle\ge0,
\end{align*}
as needed.

For the second direction, let $\X^*\in\argmin_{\X\in\Sn} g(\X)$ and let $\G^*\in\partial g(\X^*)$ such that $\langle \X-\X^*,\G^*\rangle\ge0$ for all $\X\in\mS_n$. By Assumption \ref{ass:struct} and using Danskin's theorem (see for instance \cite{DanskinTheorem}), the subdifferential set of $g(\X^*) = h(\X^*) + \max_{\y\in\mK}\y^{\top}(\mA(\X^*)-\b)$ can be written as
\begin{align*}
\partial g(\X^*) & = \nabla{}h(\X^*)+\textrm{conv}\left\lbrace\mA^{\top}(\y)\ \Big\vert\ \y\in\argmax_{\y\in\mathcal{K}}\y^{\top}(\mA(\X^*)-\b)\right\rbrace
\\ & = \nabla{}h(\X^*)+\mA^{\top}\left(\textrm{conv}\left\lbrace\y\ \Big\vert\ \y\in\argmax_{\y\in\mathcal{K}}\y^{\top}(\mA(\X^*)-\b)\right\rbrace\right)
\\ & = \nabla{}h(\X^*)+\mA^{\top}\left(\left\lbrace\y\ \Big\vert\ \y\in\argmax_{\y\in\mathcal{K}}\y^{\top}(\mA(\X^*)-\b)\right\rbrace\right)
\\ & = \nabla{}h(\X^*)+\mA^{\top}\left(\left\lbrace\y\ \Big\vert\ \y\in\argmax_{\y\in\mathcal{K}}f(\X^*,\y)\right\rbrace\right),
\end{align*}
where $\textrm{conv}\lbrace\cdot\rbrace$ denotes the convex hull operation and the third equality follows from the convexity of $\mathcal{K}$.

Thus, there exists some $\y^*\in\argmax_{\y\in\mathcal{K}}f(\X^*,\y)$ such that $\G^*=\nabla{}h(\X^*)+\mA^{\top}(\y^*)=\nabla_{\X}f(\X^*,\y^*)$.

Since $\y^*\in \argmax_{\y\in\mathcal{K}}f(\X^*,\y)$, it follows that for all $\y\in\mathcal{K}$, $f(\X^*,\y^*)\ge f(\X^*,\y)$. In addition, using the fact that $\G^*$ satisfies the first-order optimality condition, and using gradient inequality w.r.t. $f(\cdot,\y^*)$, we have that for all $\X\in\Sn$,
\begin{align*}
0\le\langle \X-\X^*,\G^*\rangle=\langle \X-\X^*,\nabla_{\X}f(\X^*,\y^*)\rangle\le f(\X,\y^*)-f(\X^*,\y^*).
\end{align*}
Thus, it follows that $f(\X,\y^*)\ge f(\X^*,\y^*)$. Therefore, $(\X^*,\y^*)$ is indeed a saddle-point of $f$.

\end{proof}

\section{Proof of \cref{lemma:EGconvergenceRate}} \label{appendixConvergenceProof}
We first restate the lemma and then prove it.

\begin{lemma}
Let $\lbrace(\X_t,\y_t)\rbrace_{t\ge1}$ and $\lbrace(\Z_{t},\w_t)\rbrace_{t\ge2}$ be the sequences generated by \cref{alg:EG} with a fixed step-size $\eta_t=\eta\le\min\left\lbrace\frac{1}{\beta_{X}+\beta_{Xy}},\frac{1}{\beta_{y}+\beta_{yX}},\frac{1}{\beta_{X}+\beta_{yX}},\frac{1}{\beta_{y}+\beta_{Xy}}\right\rbrace$ then
\begin{align*}
\max_{\y\in\mathcal{K}} f\left(\frac{1}{T}\sum_{t=1}^T \Z_{t+1},\y\right) -  \min_{\X\in\Sn} f\left(\X,\frac{1}{T}\sum_{t=1}^T\w_{t+1}\right) & \le \frac{D^2}{2\eta T},
\end{align*}
where $D:=\sup_{(\X,\y),(\tilde{\X},\tilde{\y})\in{\Sn\times\mathcal{K}}}\Vert(\X,\y)-(\tilde{\X},\tilde{\y})\Vert$.
\end{lemma}

\begin{proof}

The projection theorem states that projecting some point $\s$ onto some closed and convex set $\mathcal{C}$ satisfies that for all $\z\in\mathcal{C}$ it holds that $\langle\Pi_{\mathcal{C}}[\s]-\s,\Pi_{\mathcal{C}}[\s]-\z\rangle\le0$. In particular, for any $\X\in\Sn$, using the updates for $\X_{t+1}$ and $\Z_{t+1}$, the two following inequalities hold:
\begin{align}
\eta_t\langle\Z_{t+1}-\X,\nabla_{\X}f(\X_t,\y_t)\rangle & \le \langle\X_t-\Z_{t+1},\Z_{t+1}-\X\rangle \label{ineq:projZ}
\\ \eta_t\langle\X_{t+1}-\X,\nabla_{\X}f(\Z_{t+1},\w_{t+1})\rangle & \le \langle\X_t-\X_{t+1},\X_{t+1}-\X\rangle. \label{ineq:projX}
\end{align}

By the gradient inequality, for any $\X\in\Sn$
\begin{align} \label{ineq:EGconvergenceX}
& f(\Z_{t+1},\w_{t+1})-f(\X,\w_{t+1}) \nonumber \\ & \le \langle \Z_{t+1}-\X,\nabla_{\X}f(\Z_{t+1},\w_{t+1})\rangle \nonumber
\\ & = \langle\X_{t+1}-\X,\nabla_{\X}f(\Z_{t+1},\w_{t+1})\rangle + \langle\Z_{t+1}-\X_{t+1},\nabla_{\X}f(\X_{t},\y_{t})\rangle \nonumber
\\ & \ \ \ + \langle\Z_{t+1}-\X_{t+1},\nabla_{\X}f(\Z_{t+1},\w_{t+1})-\nabla_{\X}f(\X_{t},\y_{t}) \rangle.
\end{align}
We will bound these three terms separately.

For the first term, using \eqref{ineq:projX} and the Pythagoras identity
\begin{align} \label{ineq:EGconvergenceTerm1}
& \langle\X_{t+1}-\X,\nabla_{\X}f(\Z_{t+1},\w_{t+1})\rangle \nonumber
\\ & \le \frac{1}{\eta_t} \langle\X_t-\X_{t+1},\X_{t+1}-\X\rangle \nonumber
\\ & = -\frac{1}{2\eta_t}\Vert\X_t-\X_{t+1}\Vert_F^2 + \frac{1}{2\eta_t}\Vert\X_t-\X\Vert_F^2 - \frac{1}{2\eta_t}\Vert\X_{t+1}-\X\Vert_F^2.
\end{align}

For the second term, using \eqref{ineq:projZ} with $\X=\X_{t+1}$ and the Pythagoras identity
\begin{align} \label{ineq:EGconvergenceTerm2}
& \langle\Z_{t+1}-\X_{t+1},\nabla_{\X}f(\X_{t},\y_{t})\rangle \nonumber
\\ & \le \frac{1}{\eta_t}\langle\X_t-\Z_{t+1},\Z_{t+1}-\X_{t+1}\rangle \nonumber
\\ & = -\frac{1}{2\eta_t}\Vert\X_t-\Z_{t+1}\Vert_F^2 + \frac{1}{2\eta_t}\Vert\X_t-\X_{t+1}\Vert_F^2 - \frac{1}{2\eta_t}\Vert\Z_{t+1}-\X_{t+1} \Vert_F^2.
\end{align}

For the third term, using the Cauchy–Schwarz inequality, the $\beta_{X}$ and $\beta_{Xy}$ smoothness, and the inequality $2ab\le a^2+b^2$ we obtain
\begin{align} \label{ineq:EGconvergenceTerm3}
& \langle\Z_{t+1}-\X_{t+1},\nabla_{\X}f(\Z_{t+1},\w_{t+1})-\nabla_{\X}f(\X_{t},\y_{t}) \rangle \nonumber
\\ & \le \Vert\Z_{t+1}-\X_{t+1}\Vert_F\cdot\Vert\nabla_{\X}f(\Z_{t+1},\w_{t+1})-\nabla_{\X}f(\X_{t},\y_{t})\Vert_F \nonumber
\\ & \le \Vert\Z_{t+1}-\X_{t+1}\Vert_F\cdot\Vert\nabla_{\X}f(\Z_{t+1},\w_{t+1})-\nabla_{\X}f(\X_{t},\w_{t+1})\Vert_F \nonumber
\\ & + \Vert\Z_{t+1}-\X_{t+1}\Vert_F\cdot\Vert\nabla_{\X}f(\X_{t},\w_{t+1})-\nabla_{\X}f(\X_{t},\y_{t})\Vert_F \nonumber
\\ & \le \left(\beta_{X}\Vert\Z_{t+1}-\X_{t}\Vert_F+\beta_{Xy}\Vert\w_{t+1}-\y_{t}\Vert_2\right)\cdot\Vert\Z_{t+1}-\X_{t+1}\Vert_F \nonumber
\\ & \le \frac{\beta_{X}}{2}\left(\Vert\Z_{t+1}-\X_{t}\Vert_F^2+\Vert\Z_{t+1}-\X_{t+1}\Vert_F^2\right)+\frac{\beta_{Xy}}{2}\left(\Vert\w_{t+1}-\y_{t}\Vert_2^2+\Vert\Z_{t+1}-\X_{t+1}\Vert_F^2\right).
\end{align}

Plugging \eqref{ineq:EGconvergenceTerm1}, \eqref{ineq:EGconvergenceTerm2}, and \eqref{ineq:EGconvergenceTerm3} into \eqref{ineq:EGconvergenceX} we obtain 
\begin{align*}
& f(\Z_{t+1},\w_{t+1})-f(\X,\w_{t+1}) 
\\ & \le \frac{1}{2\eta_t}\left(\Vert\X_t-\X\Vert_F^2-\Vert\X_{t+1}-\X\Vert_F^2\right) +\left(\frac{\beta_{X}}{2}-\frac{1}{2\eta_t}\right)\Vert\Z_{t+1}-\X_{t}\Vert_F^2
\\ & \ \ \ +\left(\frac{\beta_{X}+\beta_{Xy}}{2}-\frac{1}{2\eta_t}\right)\Vert\Z_{t+1}-\X_{t+1}\Vert_F^2+\frac{\beta_{Xy}}{2}\Vert\w_{t+1}-\y_{t}\Vert_2^2.
\end{align*}

Using similar arguments, for any $\y\in\mathcal{K}$ 
\begin{align*}
& f(\Z_{t+1},\y) - f(\Z_{t+1},\w_{t+1}) 
\\ & \le \frac{1}{2\eta_t}\left(\Vert\y_t-\y\Vert_2^2-\Vert\y_{t+1}-\y\Vert_2^2\right) +\left(\frac{\beta_{y}}{2}-\frac{1}{2\eta_t}\right)\Vert\w_{t+1}-\y_{t}\Vert_2^2
\\ & \ \ \ +\left(\frac{\beta_{y}+\beta_{yX}}{2}-\frac{1}{2\eta_t}\right)\Vert\w_{t+1}-\y_{t+1}\Vert_2^2+\frac{\beta_{yX}}{2}\Vert\Z_{t+1}-\X_{t}\Vert_F^2.
\end{align*}

Summing the last two inequalities, we obtain for $\eta_t\le\min\left\lbrace\frac{1}{\beta_{X}+\beta_{Xy}},\frac{1}{\beta_{y}+\beta_{yX}},\frac{1}{\beta_{X}+\beta_{yX}},\frac{1}{\beta_{y}+\beta_{Xy}}\right\rbrace$
\begin{align*}
f(\Z_{t+1},\y) - f(\X,\w_{t+1}) & \le \frac{1}{2\eta_t}\left(\Vert(\X_t,\y_t)-(\X,\y)\Vert^2-\Vert(\X_{t+1},\y_{t+1})-(\X,\y)\Vert^2\right)
\\ & \ \ \ +\left(\frac{\beta_{X}+\beta_{yX}}{2}-\frac{1}{2\eta_t}\right)\Vert\Z_{t+1}-\X_{t}\Vert_F^2 \\
& \ \ \ +\left(\frac{\beta_{y}+\beta_{Xy}}{2}-\frac{1}{2\eta_t}\right)\Vert\w_{t+1}-\y_{t}\Vert_2^2
\\ & \ \ \ +\left(\frac{\beta_{X}+\beta_{Xy}}{2}-\frac{1}{2\eta_t}\right)\Vert\Z_{t+1}-\X_{t+1}\Vert_F^2\\
& \ \ \ +\left(\frac{\beta_{y}+\beta_{yX}}{2}-\frac{1}{2\eta_t}\right)\Vert\w_{t+1}-\y_{t+1}\Vert_2^2
\\ & \le \frac{1}{2\eta_t}\left(\Vert(\X_t,\y_t)-(\X,\y)\Vert^2-\Vert(\X_{t+1},\y_{t+1})-(\X,\y)\Vert^2\right).
\end{align*}

Averaging over $t=1,\ldots,T$ and taking a $\eta_t=\eta$
\begin{align*}
\frac{1}{T}\sum_{t=1}^T \left( f(\Z_{t+1},\y) -  f(\X,\w_{t+1})\right) 
 \le \frac{1}{2\eta T}\max_{(\X,\y)\in{\Sn\times\mathcal{K}}}\Vert(\X_1,\y_1)-(\X,\y)\Vert^2
 \le \frac{D^2}{2\eta T}.
\end{align*}
Taking the maximum over all $\y\in\mathcal{K}$ and minimum over all $\X\in\Sn$
and using the convexity of $f(\cdot,\y)$ and concavity of $f(\X,\cdot)$, 
\begin{align*}
& \max_{\y\in\mathcal{K}} f\left(\frac{1}{T}\sum_{t=1}^T \Z_{t+1},\y\right) -  \min_{\X\in\Sn} f\left(\X,\frac{1}{T}\sum_{t=1}^T\w_{t+1}\right) 
\\ & \le \max_{\y\in\mathcal{K}} \frac{1}{T}\sum_{t=1}^T f\left( \Z_{t+1},\y\right) -  \min_{\X\in\Sn} \frac{1}{T}\sum_{t=1}^T f\left(\X,\w_{t+1}\right)
 \le \frac{D^2}{2\eta T}.
\end{align*}


%

\end{proof}

\section{Calculating the dual-gap in saddle-point problems} \label{appendixDualGapCalculation}

Set some point $(\widehat{\Z},\widehat{\w})\in{\lbrace\trace(\X)=\tau,\ \X\succeq 0\rbrace\times\mathcal{K}}$. Using the concavity of $f(\widehat{\Z},\cdot)$ and convexity of $f(\cdot,\widehat{\w})$, for all $\y\in\mathcal{K}$ and $\X\in\lbrace\trace(\X)=\tau,\ \X\succeq 0\rbrace$, it holds that
\begin{align*}
f(\widehat{\Z},\y)-f(\widehat{\Z},\widehat{\w}) & \le \langle\widehat{\w}-\y,-\nabla_{\y}f(\widehat{\Z},\widehat{\w})\rangle, 
\\ f(\widehat{\Z},\widehat{\w})-f(\X,\widehat{\w}) & \le \langle\widehat{\Z}-\X,\nabla_{\X}f(\widehat{\Z},\widehat{\w})\rangle.
\end{align*}

By taking the maximum of all $\y\in\mathcal{K}$ we obtain in particular that
\begin{align*}
f(\X^*,\y^*)-f(\widehat{\Z},\widehat{\w}) &  \le f(\widehat{\Z},\y^*)-f(\widehat{\Z},\widehat{\w}) \le \max_{\y\in\mathcal{K}}f(\widehat{\Z},\y)-f(\widehat{\Z},\widehat{\w})
\\&  \le \max_{\y\in\mathcal{K}}\langle\widehat{\w}-\y,-\nabla_{\y}f(\widehat{\Z},\widehat{\w})\rangle,
\end{align*}
and taking the maximum of all $\X\in\lbrace\trace(\X)=\tau,\ \X\succeq 0\rbrace$
\begin{align*}
f(\widehat{\Z},\widehat{\w})-f(\X^*,\y^*) & \le f(\widehat{\Z},\widehat{\w})-f(\X^*,\widehat{\w}) \le f(\widehat{\Z},\widehat{\w})-\min_{\substack{\trace(\X)=\tau,\\ \X\succeq 0}}f(\X,\widehat{\w}) 
\\ & \le \max_{\substack{\trace(\X)=\tau,\\ \X\succeq 0}}\langle\widehat{\Z}-\X,\nabla_{\X}f(\widehat{\Z},\widehat{\w})\rangle.
\end{align*}

Summing these two inequalities, we obtain a bound on the dual-gap at $(\widehat{\Z},\widehat{\w})$ which can be written as
\begin{align*}
g(\widehat{\Z})-g^* & \le \max_{\y\in\mathcal{K}}f(\widehat{\Z},\y)-\min_{\X\in\Sn}f(\X,\widehat{\w}) \nonumber
\\ & \le \max_{\substack{\trace(\X)=\tau,\\ \X\succeq 0}}\langle\widehat{\Z}-\X,\nabla_{\X}f(\widehat{\Z},\widehat{\w})\rangle - \min_{\y\in\mathcal{K}} \langle\widehat{\w}-\y,\nabla_{\y}f(\widehat{\Z},\widehat{\w})\rangle.
\end{align*} 

It is easy to see that the maximizer of the first term in the RHS of the above is $\tau\v_n\v_n^{\top}$ where $\v_n$ is the smallest eigenvector of $\nabla_{\X}f(\widehat{\Z},\widehat{\W})$, and the minimizer of the second term is $\Y_{i,j}=\sign(\nabla_{\Y}f(\widehat{\Z},\widehat{\W})_{i,j})$ for $\mathcal{K}=\lbrace\Y\in\reals^{n\times n}\ \vert\ \Vert\Y\Vert_{\infty}\le1\rbrace$ and $\nabla_{\y}f(\widehat{\Z},\widehat{\w})/\Vert\nabla_{\y}f(\widehat{\Z},\widehat{\w})\Vert_2$ for $\mathcal{K}=\lbrace\y\in\reals^{n}\ \vert\ \Vert\y\Vert_{2}\le1\rbrace$.

\end{document}